\numberwithin{equation}{section}
\newtheorem {Lemma}[equation]       {Lemma}
\newtheorem {lemma}[equation]       {Lemma}
\newtheorem {Theorem}[equation]     {Theorem}
\newtheorem* {Theorem*}     {Theorem}
\numberwithin{equation}{section}
\newtheorem {Proposition}[equation] {Proposition}
\newtheorem {Corollary}[equation]   {Corollary}
\newtheorem*{Lemma*}{Lemma}
\theoremstyle{definition}
\newtheorem {Definition}[equation]  {Definition}
\theoremstyle{remark}
\newtheorem {Example}[equation]     {Example}
\newtheorem {Remark}[equation]      {Remark}
\newtheorem* {Remark*}      {Remark}
\newtheorem {Annoying Remark}[equation]      {Annoying Remark}
\newtheorem* {Example*}          {Example}
\def \SO {{\operatorname{SO}}}
\def \SU {{\operatorname{SU}}}
\def \g {{\mathfrak g}}
\def \h {{\mathfrak h}}
\def \t {{\mathfrak t}}
\def \Z {{\mathbb Z}}
\def \R {{\mathbb R}}
\def \C {{\mathbb C}}
\def \CP {{\mathbb C}{\mathbb P}}
\def \inv {^{-1}}
\def \ssminus {\smallsetminus}
\DeclareMathOperator \Ad {Ad}
\def \princ {\text{princ}}
\def \red {\text{red}}
\def\x{\times}
\def\om{\omega}
\def\sub{\subseteq}
\def \calL {\mathcal{L}}
\def \LL   {\mathcal{L}}
\def \calT {{\mathcal T}}
\def \calO {{\mathcal O}}
\def \tL {{\tilde{L}}}
\def \tmu {{\tilde{\mu}}}
\def \hlambda {{\hat{\lambda}}}
\def \taut  {\text{taut}}
\def \princ {\text{princ}}
\def \tF    {{\tilde{F}}}
\def \eps   {\varepsilon}
\newcommand\id{\operatorname{id}}
\def \Symp {{\operatorname{Symp}}}
\def \Ham {{\operatorname{Ham}}}
\def\eor{\unskip\ \hglue0mm\hfill$\diamondsuit$\smallskip\goodbreak}
 \newcommand{\comment}[1]{}
 \newcommand{\ynote}[1]{}
 \newcommand{\fnote}[1]{}
 \newcommand{\mute}[2]{}
 \newcommand{\printname}[1]{}
\newcommand{\labell}[1] {\label{#1}\printname{#1}}
\begin{document}

\title{Hamiltonian group actions on exact symplectic manifolds
with proper momentum maps are standard.}

\author{Yael Karshon}
\address{Department of Mathematics, University of Toronto,
40 St.~George Street, Toronto, Ontario, M5S 2E4 Canada }
\email{karshon@math.toronto.edu}

\author{Fabian Ziltener}
\address{Department of Mathematics, Utrecht University, Budapestlaan 6, 3584CD Utrecht, The Netherlands}
\email{f.ziltener@uu.nl}

\thanks{This research was partially funded by 
the Natural Sciences and Engineering Research Council of Canada (NSERC)}

\date{\today}

\begin{abstract}
We give a complete characterization of Hamiltonian actions
of compact Lie groups on exact symplectic manifolds
with proper momentum maps.
We deduce that every Hamiltonian action of a compact Lie group
on a contractible symplectic manifold with a proper momentum map
is globally linearizable.
\end{abstract}

\maketitle

\section{The main result}
\labell{sec:result}

Let $G$ be a compact Lie group with Lie algebra $\g = T_eG$
and dual space $\g^* = T_e^* G$.
A \textbf{momentum map} for a symplectic $G$ action 
on a symplectic manifold $(M,\omega)$
is a map $\mu \colon M \to \g^*$ that intertwines the $G$ action on $M$
with the coadjoint $G$ action on $\g^*$ and that satisfies Hamilton's equation
\begin{equation} \labell{Hamiltons eq}
 d\mu^\xi = - \iota(\xi_M) \omega \qquad \text{ for all } \xi \in \g , 
\end{equation}
where $\xi_M$ is the vector field on $M$ that corresponds to $\xi$
and where $\mu^\xi := \left< \mu , \xi \right> \colon M \to \R$
is the $\xi$th component of $\mu$.
A symplectic $G$ action is called \textbf{Hamiltonian} if it admits
a momentum map.
A \textbf{Hamiltonian $\mathbf{G}$ manifold} is a triple $(M,\omega,\mu)$
where $(M,\omega)$ is a symplectic manifold with a symplectic $G$ action
and $\mu \colon M \to \g^*$ is a momentum map.
An \textbf{isomorphism} of Hamiltonian $G$ manifolds
is an equivariant symplectomorphism
that intertwines the momentum maps.
Throughout this paper, manifolds are assumed to be non-empty 
unless stated otherwise.

\begin{Remark}
This terminology is not used consistently in the literature.
We emphasize that throughout this paper we require momentum maps 
to be equivariant
and we require the two-forms of Hamiltonian $G$ manifolds 
to be non-degenerate.
\end{Remark}

\begin{Remark}
A symplectic $G$ action on a symplectic manifold $(M,\omega)$ 
is the same thing as a homomorphism from $G$ to the group
$\Symp(M,\omega)$ of symplectomorphisms of $(M,\omega)$
that is smooth in the diffeological sense:
the map $(g,x) \mapsto g \cdot x$ from $G \times M$ to $M$ is smooth
(cf.\ \cite{IK}).
The action has a momentum map if and only if 
the image of the identity component of $G$ 
is contained in the subgroup $\Ham(M,\omega)$ of Hamiltonian diffeomorphisms.
(Being contained in $\Ham(M,\omega)$ implies that there is a map 
$\mu \colon M \to \g^*$ that satisfies Hamilton's equation~\eqref{Hamiltons eq}.
The map $x \mapsto {\displaystyle \int_{a \in G} \Ad^*(a) (\mu(a^{-1}x)) da }$,
where $da$ is the Haar probability measure on $G$,
is then a momentum map.)
It is natural to require the image of all of $G$, 
not only the identity component, to be contained in $\Ham(M,\omega)$,
but we will not need this stronger requirement.
\end{Remark}

We recall a particular construction of Hamiltonian $G$ manifolds:

\begin{Definition} \labell{def:centred model}
Let $G$ be a compact Lie group with Lie algebra $\g=T_eG$
and dual space $\g^*=T_e^*G$.
A \textbf{centred Hamiltonian $G$-model} is a Hamiltonian $G$ manifold
$(Y,\omega_Y,\mu_Y)$, that is obtained by the following construction.
\begin{quotation}
Let $H$ be a closed subgroup of $G$.
Let $(V,\omega_V)$ be a symplectic vector space with a linear $H$ action
and quadratic momentum map $\mu_V \colon V \to \h^*$.
Consider the $H$ action on $T^*G$ that is induced from its action
$h \colon a \mapsto a h\inv$ on $G$, with the fibrewise homogeneous
momentum map.  The model $(Y,\omega_Y)$
is the symplectic quotient at zero of $T^*G \times V$
with respect to the anti-diagonal $H$ action.
The $G$ action and momentum map on $Y$ are induced from 
the left action $g \colon a \mapsto ga$ on $G$
and its homogeneous momentum map on $T^*G$.
\end{quotation}
\end{Definition}

We describe centred Hamiltonian $G$ models in greater detail 
in Section~\ref{sec:models}.

The purpose of this paper is to prove the following theorem.

\begin{Theorem} \labell{main}
Let a compact Lie group $G$ act on a symplectic manifold $(M,\omega)$
with momentum map $\mu \colon M \to \g^*$.  
Assume that $M/G$ is connected.
Assume that $\mu$ is proper and $\omega$ is exact.
Then $M$ is equivariantly symplectomorphic to a centred Hamiltonian $G$-model.
\end{Theorem}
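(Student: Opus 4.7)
My plan is to exploit exactness via a Liouville flow, use properness to locate a distinguished orbit $G\cdot x_0$ on which the Liouville vector field vanishes, apply the Marle--Guillemin--Sternberg (MGS) normal form at $G\cdot x_0$ to match a neighborhood of it with a centred model, and extend this local identification globally via the flow. Averaging any primitive of $\omega$ over $G$ gives a $G$-invariant one-form $\alpha$ with $d\alpha=\omega$; I work throughout with the equivariant momentum map $\mu^\xi := \alpha(\xi_M)$, which differs from the given one by a constant in $(\g^*)^G$ and so requires no geometric change. The Liouville vector field $Z$ defined by $\iota_Z\omega = \alpha$ is $G$-invariant; its backward flow is globally defined and $G$-equivariant, and a direct computation gives $\phi_t^*\omega = e^t\omega$ together with $\mu\circ\phi_t = e^t\mu$.

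\textbf{Central orbit.} The identity $\mu\circ\phi_t = e^t\mu$ combined with properness shows that every backward trajectory $\{\phi_{-t}(x)\}_{t\ge 0}$ lies in the compact set $\mu^{-1}(\overline{B_{\|\mu(x)\|}})$ and accumulates in $\mu^{-1}(0)$. The fixed-point set $F := \{Z = 0\}$ lies in $\mu^{-1}(0)$, since $Z(x_0) = 0$ forces $\alpha_{x_0} = 0$ and hence $\mu(x_0) = 0$. Applying MGS at any point of $\mu^{-1}(0)$ and inspecting the Liouville field of the resulting local model (whose vanishing locus is exactly the local central orbit), one sees that $F$ is a smooth submanifold whose components are $G$-orbits, and that every backward trajectory converges to one of them. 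The stable manifolds of distinct components form an open $G$-invariant partition of $M$, and connectedness of $M/G$ forces a single component, so $F = G\cdot x_0$.

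\textbf{Local model and globalization.} MGS at $G\cdot x_0$ with $H := \Stab(x_0)$ and $V$ the symplectic slice produces a $G$-equivariant symplectomorphism $\Phi\colon\calU\to\calU'$ from a $G$-invariant neighborhood of $G\cdot x_0$ in $M$ to one of the central orbit in the centred model $Y := Y(H,V)$. An equivariant Moser argument further refines $\Phi$ so that $\Phi^*\alpha_Y = \alpha$ on $\calU$; this is possible because the discrepancy $\alpha - \Phi^*\alpha_Y$ is a $G$-invariant closed one-form, controlled by the equivariant cohomology of $G\cdot x_0 \cong G/H$. Once primitives match, $\Phi$ intertwines the Liouville flows, and so
\[
\tilde\Phi(x) := \phi_{Y,t}\circ\Phi\circ\phi_{M,-t}(x), \qquad t\gg 0 \text{ chosen so that } \phi_{M,-t}(x)\in\calU,
\]
is independent of $t$, globally defined (by contraction of $\phi_{M,-t}$ onto $G\cdot x_0$), $G$-equivariant, and satisfies $\tilde\Phi^*\omega_Y = \omega$ since the scalings $e^{\pm t}$ cancel. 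The analogous construction on $Y$, which itself is exact with proper momentum map and connected $Y/G$, supplies an inverse and completes the proof.

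\textbf{Main obstacle.} The crux is the equivariant refinement $\Phi^*\alpha_Y = \alpha$: MGS yields a symplectic but not automatically a Liouville match, and without a Liouville match the flow-extension formula depends on $t$ and fails to glue into a global map. A secondary difficulty is upgrading accumulation of backward Liouville trajectories to genuine convergence onto $F$ (so that the stable-manifold decomposition above is really a partition), which requires a Lyapunov argument using $\|\mu\|^2$ combined with the MGS description of the dynamics near $F$.
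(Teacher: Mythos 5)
Your overall architecture (invariant primitive, Liouville flow, local normal form near the zero level, conjugation by the flows to globalize) is the same as the paper's, and your globalization formula $\tilde\Phi=\phi_{Y,t}\circ\Phi\circ\phi_{M,-t}$ is essentially the paper's commuting square. But there is a genuine gap at the step you treat as ``secondary'': the identification of a central orbit. Your set $F=\{Z=0\}$ is an artifact of the chosen primitive, not of $(M,\omega,\mu)$, and it can be empty. Take $M=S^1\times\R=T^*S^1$ with $\omega=dp\wedge dq$ and the invariant primitive $\alpha=p\,dq+f'(p)\,dp$ with $f'(0)\neq 0$; the momentum map for the rotation action is still $\mu=p$ (proper), but the Liouville field is $Z=p\,\partial_p-f'(p)\,\partial_q$, which is nowhere zero: the backward flow spirals onto the circle $\{p=0\}$ without converging to any point. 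This also shows that ``inspecting the Liouville field of the MGS local model'' tells you nothing about $Z$, because the MGS symplectomorphism matches $\omega$ but not the primitives --- the very point you flag as the main obstacle later cannot be invoked earlier to analyze $F$. Moreover, even when zeros exist, the linearization of $Z$ at a zero satisfies $\omega(Au,v)+\omega(u,Av)=\omega(u,v)$, so its eigenvalues pair as $\lambda,\,1-\lambda$; saddles are possible, stable manifolds need not be open, and your ``open partition into basins'' argument fails. The deeper missing ingredient is any reason why $\mu^{-1}(0)$ (equivalently the reduced space $\mu^{-1}(0)/G$, which is compact by properness) must be a finite union of orbits. The paper gets this from Sjamaar's de Rham theorem for singular symplectic quotients: on a compact component $X$ of $\mu^{-1}(0)/G$ of dimension $2k\geq 2$ the class $[\omega_X^k]$ is nonzero, while exactness of $\omega$ forces $\omega_X$ to be exact in $\Omega(X)$; hence $\dim X=0$. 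You have no substitute for this input, and without it the MGS neighbourhood of an orbit in $\mu^{-1}(0)$ need not have proper model momentum map, so the flow-conjugation cannot even get started.

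On your ``main obstacle,'' the paper's resolution is worth comparing with yours: instead of refining the MGS map $\Phi$ by an equivariant Moser isotopy so that $\Phi^*\alpha_Y=\alpha$, the paper keeps $\Phi$ and replaces the \emph{global} primitive on $M$ by $\lambda:=\lambda'+\varphi^*(\hat\lambda-\lambda')$, where $\hat\lambda=\Phi^*\lambda_Y$ and $\varphi$ is an equivariant retraction of $M$ into $\mu^{-1}(V)$ built from a cut-off of the lifted Euler flow; this new primitive agrees with $\hat\lambda$ on $\mu^{-1}(B)$ and still induces $\mu$ because every component of $M$ meets $\mu^{-1}(B)$. Your sketch (discrepancy controlled by cohomology of $G/H$) points in a workable direction --- the discrepancy is a basic closed one-form vanishing on the orbit, hence exact on a retractible neighbourhood --- but the subsequent Moser step that absorbs an exact discrepancy $df$ into an equivariant diffeomorphism is itself not routine and is not supplied. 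I would recommend adopting the paper's primitive-modification trick, and, more importantly, supplying the Sjamaar-type argument (or some other argument) that reduces $\mu^{-1}(0)$ to finitely many orbits before invoking the local normal form.
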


We prove Theorem~\ref{main} in Section~\ref{sec:liouville}.
Here are some implications on the topology, equivariant topology,
and symplectic topology of the manifold.

\begin{Corollary} \labell{cor:properties}
Let a compact Lie group $G$ act on a symplectic manifold 
$(M,\omega)$ with momentum map $\mu \colon M \to \g^*$.  
Assume that $M/G$ is connected.
Assume that $\mu$ is proper and $\omega$ is exact.
Then the following results hold.
\begin{itemize}
\item[(i)] The Euler characteristic of $M$ is non-negative.
\item[(ii)] $M$ contains either no $G$ fixed points 
or exactly one $G$ fixed point.
\item[(iii)] The Gromov width of $(M,\omega)$ is infinite.
\end{itemize}
\end{Corollary}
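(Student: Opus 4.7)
The plan is to apply Theorem~\ref{main} to replace $(M,\omega,\mu)$ with a centred Hamiltonian $G$-model $(Y,\omega_Y,\mu_Y)$, and to read off each of the three assertions from the explicit geometry of $Y$. As will be made precise in Section~\ref{sec:models}, the analysis of Definition~\ref{def:centred model} should yield an equivariant identification
\[
Y \;\cong\; G\times_H(\h^0\oplus V) \longrightarrow G/H
\]
of $Y$ with the associated vector bundle over $G/H$, where $\h^0\subset\g^*$ is the annihilator of $\h$ with its coadjoint $H$-action and $V$ is the given symplectic $H$-representation. Granted this picture, each of (i)--(iii) reduces to a direct argument about $Y$.

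Part (i) is then immediate: the vector bundle $Y\to G/H$ deformation retracts onto its zero section, so $\chi(M)=\chi(G/H)$, and $\chi(G/H)\ge 0$ is classical for a closed subgroup $H$ of a compact Lie group $G$ (by Lefschetz fixed points of a regular element of a maximal torus of $G$ acting on $G/H$, this equals $|W_G|/|W_H|$ when $\rank H=\rank G$ and vanishes otherwise).

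For (ii), I would argue directly on the model: a point $[a,w]\in G\times_H(\h^0\oplus V)$ is $G$-fixed only if $a^{-1}Ga\subseteq H$, which rearranges to $G\subseteq aHa^{-1}$. For $H$ a proper closed subgroup of the compact group $G$ this is impossible, so $M^G=\emptyset$. The remaining case is $H=G$, in which $\h^0=\{0\}$ and $Y\cong V$ with $G$ acting linearly, so $M^G\cong V^G$. The hypothesis that $\mu_Y$ is proper then forces $\mu_V^{-1}(0)$---a cone, since $\mu_V$ is quadratic---to be compact, hence equal to $\{0\}$; since $V^G\subseteq\mu_V^{-1}(0)$, we conclude $M^G=\{0\}$.

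For (iii), the plan is the standard Liouville rescaling argument. The tautological one-form on $T^*G$ together with the radial primitive on $V$ assemble into an $H$-invariant Liouville form on $T^*G\times V$ that descends under the symplectic quotient to a $G$-invariant primitive $\lambda_Y$ of $\omega_Y$; the associated Liouville vector field $Z_Y$ acts as fibrewise radial scaling on $Y\to G/H$, hence is forward-complete, with flow $\psi_t$ satisfying $\psi_t^*\omega_Y=e^t\omega_Y$. Given any Darboux embedding $\phi\colon B^{2n}(r_0)\hookrightarrow Y$, the rescaled maps $x\mapsto\psi_t\bigl(\phi(e^{-t/2}x)\bigr)$ are then symplectic embeddings of $B^{2n}(e^{t/2}r_0)$ into $Y$, yielding infinite Gromov width. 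Essentially all of the conceptual work is absorbed in Theorem~\ref{main} and in the description of centred models in Section~\ref{sec:models}; the one subtle point within the present corollary is the appeal to properness in (ii), needed to prevent a nonzero trivial $H$-summand of $V$ from producing a whole affine subspace of fixed points.
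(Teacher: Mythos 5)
Your proposal is correct and follows essentially the same route as the paper: Theorem~\ref{main} reduces everything to a centred model $Y\cong G\times_H(\h^0\oplus V)$, and your three arguments are precisely the paper's Lemmas~\ref{homotopy}, \ref{fixed set connected}, and \ref{Gromov width} (deformation retraction onto $G/H$ plus Hopf--Samelson, the $H=G$ versus $H\neq G$ dichotomy with properness forcing $V^G\subseteq\mu_V^{-1}(0)=\{0\}$, and the Liouville rescaling). The only cosmetic difference is that you spell out why the fixed point set is empty when $H\neq G$, a step the paper asserts without proof.
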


Here are two situations in which we get even more precise information.

\begin{Corollary} \labell{cor:S1}
Let the circle group $S^1$ act faithfully on a connected 
exact symplectic 
manifold with a proper momentum map.  Then the manifold is equivariantly 
symplectomorphic either to the cylinder $S^1\times\R$ 
with $S^1$ acting by rotations or to $\C^n$ with $S^1$ acting linearly.
\end{Corollary}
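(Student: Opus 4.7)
The plan is to apply Theorem~\ref{main} to $(M,\omega,\mu)$ and then enumerate the centered Hamiltonian $S^1$-models compatible with the faithfulness and properness hypotheses.

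By Theorem~\ref{main}, $M$ is equivariantly symplectomorphic to a centered Hamiltonian $S^1$-model $(Y,\omega_Y,\mu_Y)$, constructed from a closed subgroup $H \leq S^1$ and a symplectic $H$-representation $(V,\omega_V)$ via the symplectic reduction at zero of $T^*S^1 \times V$ by the anti-diagonal $H$-action. Since an equivariant symplectomorphism pulls back a momentum map to a momentum map, and momentum maps for an $S^1$-action differ by an additive constant in $\R$, properness of $\mu$ implies properness of $\mu_Y$. The closed subgroups of $S^1$ are $S^1$ itself and the finite cyclic subgroups $\Z/k$ for $k \geq 1$.

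Case 1 ($H = S^1$). The reduction of $T^*S^1$ at zero by the right $S^1$-action is a point, so $Y = V$ with its linear $S^1$-action and quadratic momentum map $\mu_V$. Decomposing $V \cong \C^n$ into weight spaces with weights $(a_1,\ldots,a_n)$, one has $\mu_V(z) = \tfrac{1}{2}\sum a_i |z_i|^2$. Properness forces all $a_i$ to have a common sign; reversing the orientation of $S^1$ if necessary, they are positive. Faithfulness of the $S^1$-action is then equivalent to $\gcd(a_1,\ldots,a_n) = 1$, so $Y \cong \C^n$ with a faithful linear $S^1$-action.

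Case 2 ($H = \Z/k$, $k \geq 1$). Because $\Lie(H) = 0$, the $H$-momentum maps are zero and the reduction at zero is simply the free quotient $Y = (S^1 \times \R \times V)/(\Z/k)$, where $\Z/k$ acts by $h \cdot (a,r,v) = (ah\inv, r, h \cdot v)$. The momentum map descends to $\mu_Y([a,r,v]) = r$, whose fibre over $r_0$ is $(S^1 \times V)/(\Z/k)$; compactness of fibres forces $V = 0$. Identifying $S^1/(\Z/k) \cong S^1$ via $[a] \mapsto a^k$ gives $Y \cong S^1 \times \R$, with the induced $S^1$-action $g \cdot (b,r) = (g^k b, r)$. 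Its kernel is the group of $k$th roots of unity, so faithfulness forces $k = 1$, yielding $Y = S^1 \times \R$ with the standard rotation action. The main obstacle is the bookkeeping in this case: one must correctly identify the induced $S^1$-action and momentum map on the quotient in order to rule out both $V \neq 0$ (via non-properness of $\mu_Y$) and $k \geq 2$ (via non-faithfulness). The remainder is a routine case analysis over closed subgroups of $S^1$.
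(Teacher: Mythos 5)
Your proposal is correct and follows essentially the same route as the paper: apply Theorem~\ref{main} and then classify the centred Hamiltonian $S^1$-models, splitting into the cases $H=S^1$ (giving $V\cong\C^n$ with all weights of one sign by properness) and $H$ finite (where properness forces $V=0$ and faithfulness forces $H=\{1\}$, giving the cylinder $T^*S^1$); this is exactly the content of the paper's Lemma~\ref{circle}. Your explicit verification that properness of $\mu$ transfers to $\mu_Y$ (via the fact that momentum maps on a connected manifold differ by a constant) is a detail the paper leaves implicit, and is a welcome addition.
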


\begin{Corollary} \labell{cor:contractible}
Let a compact Lie group $G$ act on a symplectic manifold $(M,\omega)$
with a momentum map $\mu \colon M \to \g^*$.
Suppose that $M$ is contractible and $\mu$ is proper.
Then $(M,\omega)$ is equivariantly symplectomorphic
to a symplectic vector space with a linear symplectic $G$ action.
In particular, a compact Lie group action on $\R^{2n}$
with proper momentum map is always linearizable.
\end{Corollary}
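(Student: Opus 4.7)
The plan is to apply Theorem~\ref{main} to $M$ and then to rule out every centred model except the linear one. First I verify the hypotheses of Theorem~\ref{main}: since $M$ is contractible it is connected, so $M/G$ is connected; and $H^2(M;\R)=0$, so the closed form $\omega$ is exact. Theorem~\ref{main} then supplies an equivariant symplectomorphism $(M,\omega)\cong(Y,\omega_Y)$ where $Y$ is a centred Hamiltonian $G$-model associated with some closed subgroup $H\le G$ and some symplectic $H$-representation $(V,\omega_V)$.

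Next I invoke the structural description of centred models developed in Section~\ref{sec:models}: as a $G$-manifold, $Y$ is equivariantly diffeomorphic to the associated bundle $G\times_H V$. The fibrewise rescaling $[a,v]\mapsto[a,tv]$ is a $G$-equivariant deformation retraction of $Y$ onto the zero section, a copy of $G/H$. Since $Y\simeq M$ is contractible, so is $G/H$. But $G/H$ is a closed connected smooth manifold (compact and without boundary, as $G$ is compact and $H$ is closed), and any closed manifold of positive dimension has non-vanishing top $\Z/2$-homology; contractibility therefore forces $\dim G/H=0$, and hence $H=G$.

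Finally, unpacking the centred-model construction in the case $H=G$ identifies $Y$ with $(V,\omega_V)$ carrying its original linear $G$-action: the anti-diagonal $G$-action on $T^*G\times V$ is free on the zero level set of its moment map, with a single $G$-orbit over each point of $V$, so the symplectic quotient is canonically $(V,\omega_V)$. Composing with the symplectomorphism from Theorem~\ref{main} yields an equivariant symplectomorphism from $(M,\omega)$ to a symplectic vector space with a linear $G$-action. The in-particular statement is the same conclusion applied to the contractible manifold $M=\R^{2n}$: transporting the linear $G$-action on $V$ back to $\R^{2n}$ exhibits the original action as linearizable.

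The only input beyond Theorem~\ref{main} itself is the associated-bundle description $Y\cong G\times_H V$ of centred models, which will be supplied in Section~\ref{sec:models}. Once this is in hand, the rest is a short combination of the Poincar\'e lemma and the topological observation that a closed positive-dimensional manifold cannot be contractible; I expect no further obstacle.
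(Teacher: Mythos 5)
Your proposal is correct and follows essentially the same route as the paper: Theorem~\ref{main} reduces the problem to a centred model $Y$, the homotopy equivalence $Y \simeq G/H$ (Lemma~\ref{homotopy}) together with the fact that a contractible closed manifold is a point forces $H = G$, and then the symplectic quotient collapses to $(V,\omega_V)$ with its linear action (Lemma~\ref{contractible model}). One small inaccuracy: the associated-bundle description of the model is $G \times_H (\h^0 \times V)$, not $G \times_H V$ --- the fibre also carries the annihilator $\h^0$ of $\h$ in $\g^*$ --- but this does not affect your argument, since the fibrewise retraction onto the zero section $G/H$ still applies and $\h^0 = 0$ once $H = G$.
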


\begin{proof}[Proofs of Corollaries~\ref{cor:properties}, 
\ref{cor:S1}, and~\ref{cor:contractible}.]
These corollaries follow from Theorem~\ref{main}
and from properties of centred Hamiltonian $G$ models
that we list in Section~\ref{sec:properties}.
For Corollary~\ref{cor:properties},
see Lemmas~\ref{homotopy}, \ref{fixed set connected}, and \ref{Gromov width}.
For Corollary~\ref{cor:S1}, Lemma~\ref{circle}.
For Corollary~\ref{cor:contractible},
note that $\omega$ is exact if $M$ is contractible, and 
see Lemma~\ref{contractible model}.
\end{proof}

\begin{Example}
None of the following exact symplectic manifolds 
admits an action of a compact Lie group $G$ with a proper momentum map.
In particular, none of them admits a proper real valued function
whose Hamiltonian flow is periodic.
\begin{itemize}
\item[(a)]
The cotangent bundle of any manifold with negative Euler characteristic, 
for example, $T^*X$ where $X$ is a closed oriented connected surface
of genus $\geq 2$ or the product of such a surface
with a closed connected manifold of positive Euler characteristic.

\item[(b)]
Any open subset of $\R^{2n}$, with coordinates $x_1,y_1,\ldots,x_n,y_n$
and the standard symplectic structure $\sum_{j=1}^n dx_j \wedge dy_j$,
on which $x_1$ and $y_1$ are bounded.
\end{itemize}
This follows from Corollary~\ref{cor:properties}:
manifolds of the first form have a positive Euler characteristic,
and manifolds of the second form have a finite Gromov width \cite{Gr}.
\end{Example}

Properness of the momentum map is crucial.   
Without this assumption, actions of the trivial group 
provide counterexamples to the conclusions of our various results.
In Section~\ref{sec:nonproper} we give more interesting examples, 
of exotic Hamiltonian actions on symplectic vector spaces.
For example, for any compact connected non-abelian Lie group $G$,
there exists a symplectic vector space $\R^{2n}$ 
with a Hamiltonian $G$ action that is not isomorphic to a linear action.
Thus, the conclusion of Corollary~\ref{cor:contractible} 
fails to hold in this situation.  See Corollary~\ref{not linearizable}.

\medskip

The strategy of the proof of Theorem \ref{main} is as follows. 
After shifting the momentum map by a constant,
we may assume that the momentum map is obtained from an invariant primitive
of $\omega$ as described in Section~\ref{sec:exact}.
Using Reyer Sjamaar's de Rham theory for singular symplectic quotients 
\cite{sjamaar}, 
we show that $\mu^{-1}(0)$ is a finite union $\bigsqcup G \cdot x_j$
of $G$-orbits. 
By the local normal form theorem for isotropic orbits 
there exists a $G$-equivariant symplectomorphism $F$ between
some neighbourhood $U$ of $\mu^{-1}(0)$ and an open subset $V$ 
in a finite union $\bigsqcup Y_j$
of centred Hamiltonian $G$-models $Y_j$. The negative Liouville flows 
on $M$ and on $Y_j$ shrink any compact set into $U$ and $V$, respectively. 
Combining them with $F$, we obtain a $G$-equivariant symplectomorphism 
between $M$ and $\bigsqcup Y_j$.
If $M/G$ is non-empty and connected, there is exactly one $Y_j$.

\medskip

In the literature, there are many classification results
for Hamiltonian group actions whose complexity -- 
half the dimension of a generic non-empty reduced space -- is low.
See
\cite{delzant,KL} (symplectic toric manifolds,
even without compactness or properness),
\cite{moser} (compact symplectic 2-manifolds, no action),
\cite{k:periodic} 
(Hamiltonian circle actions on compact symplectic four-manifolds),
\cite{KT:globex} (``tall'' complexity one Hamiltonian torus actions, 
proper momentum maps),
\cite{knop,losev} (complexity zero non-abelian group actions on compact 
symplectic manifolds),
\cite{chiang} (complexity one non-abelian group actions 
on compact symplectic six-manifolds).
Additional references are listed in the introduction to~\cite{KT:globex}.

What is special about our Theorem~\ref{main} is that it gives a situation 
in which we can characterize Hamiltonian $G$ manifolds
of \emph{arbitrary} complexity.
This work follows two earlier results that apply to actions
of arbitrary complexity:
\begin{itemize}
\item[(i)]
Delzant \cite[section~1]{delzant} proved that a Hamiltonian circle action
on a compact symplectic $2n$-manifold whose fixed point set has exactly two
connected components, of which one is an isolated fixed point,
is equivariantly symplectomorphic to a standard circle action on $\CP^n$.

\item[(ii)]
Let a torus $T$ act on a compact symplectic manifold $(M,\omega)$
with a momentum map $\mu \colon M \to \t^*$.
Let $\calT$ be a convex open subset of $\t^*$
that contains $\mu(M)$ and such that $\mu$ is proper as a map to $\calT$.
Let $x \in M$ be a point;
suppose that $\mu^{-1}(\mu(x)) = \{ x \}$
and that $\mu(x)$ is contained in the momentum image
of every component of the fixed point set $M^K$
for every subgroup $K$ of $T$.
Then $M$ is equivariantly symplectomorphic to 
the open subset 
$\{ z \in \C^n \ | \ \mu(x) + \pi \sum_j |z_j|^2 \eta_j \in \calT \}$
of the standard $\C^n$,
where $\eta_1,\ldots,\eta_n$ are the isotropy weights at $p$,
and where our normalization convention is such that 
we identify the circle with $\R/\Z$ (rather than $\R/2\pi\Z$).
This is proved in \cite[section~2]{KT:globex}.
\end{itemize}

We end this section with a discussion of the adjective ``centred''.

\begin{Remark}
Let $(M,\omega,\mu)$ be a Hamiltonian $G$ manifold.
Suppose that $\mu$ is proper as a map to some convex open subset $\calT$
of $\g^*$.  
Let $\alpha \in \g^*$ be a point that is fixed under the coadjoint action.
We say that the Hamiltonian $G$ manifold is \textbf{centred} about $\alpha$
if, for every subgroup $K$ of $G$,
the point $\alpha$ is contained in the momentum map image 
of every component of the fixed point set $M^K$.
\begin{itemize}
\item[(i)]
Every centred Hamiltonian $G$ model
whose momentum map is proper (to $\calT := \g^*$)
is centred about $\alpha = 0$ according to this definition.
\item[(ii)]
This definition of ``centred'' is consistent with the notion of ``centred''
that was introduced in \cite[Def.~1.4]{KT:centered} 
and used in \cite{KT:gromov}.
The only difference is that in \cite{KT:centered,KT:gromov}
we restricted to the special case that $G$ is a torus and $M$ is connected
and we fixed a choice of $\calT$.
(The definition in \cite[Def.~1.4]{KT:centered}
is phrased slightly differently from that of \cite[Def.~2.2]{KT:gromov},
but it is equivalent to it.  See \cite[Rk.~2.7]{KT:gromov}.)
\end{itemize}
\eor
\end{Remark}

\section{Exact Hamiltonian $G$ manifolds}
\labell{sec:exact}

A symplectic manifold $(M,\omega)$ is called \textbf{exact}
if the symplectic form $\omega$ is exact, that is,
if it has a primitive:
a one-form $\lambda$ such that $\omega = d\lambda$.
If an exact form $\omega$ is preserved under the action of a 
compact Lie group $G$ then, by averaging,
we can choose its primitive $\lambda$ to be $G$ invariant.
In this situation, consider the map
$$ \mu \colon M \to \g^* $$
such that, for every $\xi \in \g^*$, the component
$$ \mu^\xi := \left< \mu , \xi \right> \colon M \to \R $$
is given by plugging the corresponding vector field $\xi_M$
into the one-form $\lambda$:
\begin{equation} \labell{exact mm}
 \mu^\xi = \iota(\xi_M) \lambda.
\end{equation}
This map
is a momentum map for the $G$-action on $(M,\omega)$.
(Indeed, by Cartan's formula 
$\calL_{\xi_M} \lambda 
  = \left( d \iota (\xi_M) + \iota(\xi_M) d \right) \lambda.$
The right hand side is $d \mu^\xi + \iota(\xi_M) \omega$,
and the left hand side is zero.)
A Hamiltonian $G$ manifold $(M,\omega,\mu)$ is \textbf{exact}
if $\mu$ is obtained in this way from an invariant primitive of $\omega$.

The following lemma is used in the proof of Lemma \ref{structure descends}.
A variant of it for singular reduced spaces appears in 
Lemma~\ref{omega exact}.

\begin{Lemma}[Reduction of exact symplectic manifolds] 
\labell{smooth reduction} 
Let $G$ be a compact Lie group and let $(M,\omega,\mu)$ be 
an exact Hamiltonian $G$ manifold.
Suppose that the $G$ action on the level set $Z := \mu^{-1}(0)$ is free.
Consider the inclusion-quotient diagram 
$$ \xymatrix{
 Z \ar@{^{(}->}[r]^{\text{i}} \ar[d]_{\pi} & M. \\ M_\red &
    } $$
Then  
\begin{itemize} 
\item[(i)]
there exist unique smooth manifold structures on $Z$ and $M_\red$ 
such that $i$ is an immersion and $\pi$ is a submersion; 
\item[(ii)]
there exist a unique one-form $\lambda_\red$
and a unique two-form $\omega_\red$ on $M_\red$ such that 
$$ \pi^* \lambda_\red = i^* \lambda \qquad \text{ and } \qquad
   \pi^* \omega_\red = i^* \omega ;$$
\item[(iii)]
the two-form $\omega_\red$ on $M_\red$ is symplectic, and
$\omega_\red = d\lambda_\red$.  
\end{itemize} 
\end{Lemma}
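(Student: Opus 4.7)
The plan is to carry out the standard Marsden--Weinstein reduction and observe that the choice of primitive, together with the formula \eqref{exact mm}, makes the one-form descend as well.

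For (i), I would first check that $0$ is a regular value of $\mu$. Freeness of the $G$-action on $Z$ means the infinitesimal action $\g \to T_zM$, $\xi \mapsto \xi_M(z)$, is injective for every $z\in Z$. By Hamilton's equation $d\mu_z^\xi(v) = \omega_z(v,\xi_M(z))$, so the kernel of $d\mu_z$ is the $\omega$-orthogonal of $T_z(G\cdot z)$, which has codimension $\dim G$ by the non-degeneracy of $\omega$. Thus $d\mu_z$ is surjective, $Z$ is a smooth submanifold with $i$ an embedding (in particular an immersion), and $M_\red = Z/G$ carries a unique smooth structure making $\pi$ a submersion because $G$ acts freely and properly on $Z$.

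For (ii), uniqueness of $\lambda_\red$ and $\omega_\red$ is immediate since $\pi$ is a surjective submersion, so $\pi^*$ is injective on forms. For existence, a form on $Z$ descends to $M_\red$ iff it is $G$-invariant and horizontal (its contraction with every generator $\xi_Z$ vanishes). Both $i^*\lambda$ and $i^*\omega$ are $G$-invariant because $\lambda$, $\omega$, and $Z$ are. Horizontality of $i^*\omega$ is the usual Marsden--Weinstein calculation: for $v\in T_zZ = \ker d\mu_z$ and $\xi\in\g$,
\[
(i^*\omega)_z(\xi_M(z),v) \;=\; \omega_z(\xi_M(z),v) \;=\; -d\mu_z^\xi(v) \;=\; 0.
\]
Horizontality of $i^*\lambda$ is where exactness enters: for any $\xi\in\g$,
\[
\iota(\xi_Z)\, i^*\lambda \;=\; i^*\bigl(\iota(\xi_M)\lambda\bigr) \;=\; i^*\mu^\xi \;=\; 0,
\]
since $\mu$ vanishes on $Z$ by definition. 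This is the only place the hypothesis that $\mu$ is obtained from $\lambda$ via \eqref{exact mm} is used, and I expect it to be the main (though still mild) obstacle to keep clean: one must really use \eqref{exact mm}, not just that $\omega=d\lambda$.

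For (iii), closedness of $\omega_\red$ and the identity $\omega_\red = d\lambda_\red$ both follow from injectivity of $\pi^*$ applied to
\[
\pi^*\omega_\red \;=\; i^*\omega \;=\; i^*d\lambda \;=\; d\, i^*\lambda \;=\; d\,\pi^*\lambda_\red \;=\; \pi^* d\lambda_\red .
\]
Non-degeneracy of $\omega_\red$ is the usual Marsden--Weinstein verification: the symplectic orthogonal of $T_zZ$ in $T_zM$ equals $T_z(G\cdot z)$ (both have dimension $\dim G$, and one inclusion was shown in the horizontality computation above), so the kernel of $i^*\omega$ at $z$ is exactly $T_z(G\cdot z)$, which is the vertical tangent space of $\pi$. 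Hence $\omega_\red$ is non-degenerate on $T_{\pi(z)}M_\red = T_zZ/T_z(G\cdot z)$.
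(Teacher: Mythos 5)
Your proposal is correct and follows the same route as the paper: the paper simply cites Marsden--Weinstein for parts (i), the $\omega$-statements of (ii), and (iii), and its only added content is exactly your key observation that $i^*\lambda$ is basic because $\iota(\xi_M)\, i^*\lambda = \mu^\xi \circ i = 0$ on $Z$. You have merely written out the standard reduction steps that the paper leaves to the reference.
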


\begin{proof}
The only difference from the standard Marsden-Weinstein reduction \cite{MW} 
is that we also obtain a reduction of the one-form $\lambda$ and not only 
of the two-form $\omega$.   Indeed, the one-form $i^*\lambda$ on $Z$ 
is $G$ invariant because $\lambda$ is $G$-invariant,
and it is $G$-basic (that is, descends to $M_\red$)
because, additionally, $\iota_{\xi_M} i^* \lambda = \mu^\xi \circ i = 0$
for all $\xi \in \g$.
\end{proof}

\section{Centred Hamiltonian $G$ models}
\labell{sec:models}

In this section we describe centred Hamiltonian $G$ models
$(Y,\omega_Y,\mu_Y)$
with more details than in Definition~\ref{def:centred model},
and on every such a model we specify a particular $G$-invariant
one-form $\lambda_Y$
such that $d\lambda_Y = \omega_Y$
and whose corresponding momentum map is $\mu_Y$.

\medskip

The construction involves three ingredients:
a compact Lie group $G$, a closed subgroup $H \subset G$,
and a linear symplectic representation of $H$
on a symplectic vector space $(V,\omega_V)$
with a quadratic momentum map $\mu_V \colon V \to \h^*$.

\medskip

We begin with some notation.
We denote an element of the cotangent bundle $T^*G$
by either $\beta$ or $(a,\beta)$, interchangeably,
where $\beta \in T_a^*G$.
For $g \in G$, we denote by $(a,\beta) \mapsto (ga,g\beta)$
the lifting to $T^*G$ of the diffeomorphism $a \mapsto ga$ of $G$.
Similarly, we denote by $(a,\beta) \mapsto (ah,\beta h)$
the lifting to $T^*G$ of the diffeomorphism $a \mapsto ah$ of $G$.
(That is, if $L_{g'} \colon G \to G$ and $R_{h'} \colon G \to G$ 
denote, respectively, the left and right translation maps
$a \mapsto g'a$ and $a \mapsto ah'$,
then $g\beta$ and $\beta h$ are the images of $\beta$
under the linear maps $L_{g\inv}^* \colon T_a^* G \to T_{ga}^* G$
and $R_{h\inv}^* \colon T_a^* G \to T_{ah}^* G$.)
These transformations commute, so expressions such as $g \beta h$
are well defined.
With this notation,
the left invariant trivialization of the cotangent bundle
carries $(a,\varphi) \in G \times \g^*$ to $(a,a\varphi) \in T^*G$,
and the coadjoint action of $a \in G$ on $\g^* = T_e^*G$
is $\Ad^*(a) \colon \varphi \mapsto a \varphi a\inv$.

Let $\lambda_{\taut}$ be the tautological one-form on $T^*G$,
so that $d\lambda_{\taut} = \omega_{T^*G}$
is the canonical symplectic form.
The $G$ action on $T^*G$
that is induced from the action $g \mapsto L_g$ on~$G$
has the momentum map
$$ \mu_L \colon T^*G \to \g^* \quad ; \quad
   (a,a\varphi) \mapsto \Ad^*(a)(\varphi).$$
The $H$ action on $T^*G$
that is induced from the action $h \mapsto R_{h\inv}$ on~$G$
has the momentum map
$$ \mu_R \colon T^*G \to \h^* \quad ; \quad
   (a,a\varphi) \mapsto - \varphi|_\h ,$$
where $\varphi \mapsto \varphi|_\h$ is the natural map $\g^* \to \h^*$.
Moreover, the momentum maps that are obtained from $\lambda_{\taut}$
vanish along the zero section, 
so they coincide with the momentum maps $\mu_L$ and $\mu_R$.

Before proceeding, we recall the definition of the Euler vector field.

\begin{Definition} \labell{def:euler}
The \textbf{Euler vector field} on a vector space $W$
is the vector field that, under the natural trivialization of $TW$,
is given by the identity map. 
Equivalently, it is the velocity vector field of the flow 
$t \mapsto e^t v$ on $W$.
\end{Definition}

Note that, if $X_V$ is the Euler vector field on a vector space $V$
and $\alpha$ is a differential form on $V$ with constant coefficients,
then $\LL_{X_V} \alpha = k \alpha$ where $k$ is the degree of $\alpha$.

We now turn to the symplectic vector space $(V,\omega_V)$ 
with the $H$ action and 
with the quadratic momentum map $\mu_V \colon V \to \h^*$.
Let $\lambda_V = \frac{1}{2} \iota(X_V) \omega_V$,
where $X_V$ is the Euler vector field on $V$. 
Then $\lambda_V$ is an $H$-invariant one-form on $V$, 
and $d\lambda_V = \omega_V$. 
(The latter follows from Cartan's formula, closedness of $\om_V$, 
and the identity $\LL_{X_V}\om_V=2\om_V$.) 
Moreover, the momentum map that is obtained from $\lambda_V$
vanishes at the origin, 
so it coincides with the quadratic momentum map $\mu_V$.

We now consider the product $T^*G \times V$, 
whose elements we write as $(a,a\varphi,v)$
with $a \in G$, $\varphi \in \g^*$, and $v \in V$.
We have the left $G$ action
$ \tL_g \colon (a,a\varphi,v) \mapsto (ga, ga\varphi , v)$,
with the momentum map
\begin{equation} \labell{tmuL}
 \tmu_L \ \colon \ T^*G \times V \ \to \ \g^* \quad ; \quad
 (a,a\varphi,v) \mapsto \Ad^*(a)(\varphi) ,
\end{equation}
and the anti-diagonal $H$ action
$ D_h \colon (a,a\varphi,v) \mapsto (ah\inv, a \varphi h\inv , h \cdot v)$,
with the momentum map
\begin{equation} \labell{tmuD}
 \tmu_D \ \colon \ T^*G \times V \ \to \ \h^* \quad ; \quad
 (a,a\varphi,v) \mapsto - \varphi|_\h + \mu_V(v) .
\end{equation}
The left $G$ action and anti-diagonal $H$ actions commute.
The $H$ momentum map $\tmu_D$ is invariant with respect to the $G$ action,  
and the $G$ momentum map $\tmu_L$ 
is invariant with respect to the anti-diagonal $H$ action.

In Definition~\ref{def:centred model},
we defined the centred Hamiltonian $G$ model $(Y,\omega_Y)$
that is constructed from this data
to be the symplectic reduction of $T^*G \times V$
with respect to the $H$ action.
Explicitly,
$$ Y \ = \ (T^*G \times V) /\!/ H \ := \ Z / H$$
where 
$$ Z := \tmu_D \inv (0)
\left( = 
\left\{ (a,a\varphi,v) \ \left| \ 
 \varphi(\xi)=\mu_V^\xi(v) \text{ for all } \xi \in \h \right. \right\} 
 \right). $$

\begin{Lemma} \labell{structure descends} \ 
\begin{enumerate}
\item
There exist unique manifold structures on $Z$ and on $Y$
such that the inclusion map $i \colon Z \to T^*G \times V$ is an immersion
and the quotient map $\pi \colon Z \to Y$ is a submersion.

\item
There exist a unique one-form $\lambda_Y$ and a unique two-form $\omega_Y$
on $Y$ such that $\pi^* \lambda_Y = i^* (\lambda_{\taut} \oplus \lambda_V)$
and $\pi^* \omega_Y = i^* (\omega_{T^*G} \oplus \omega_V)$.

\item
The two-form $\omega_Y$ is symplectic, and $d\lambda_Y = \omega_Y$.
\end{enumerate}

Moreover, 

\begin{enumerate}[resume]
\item
The left $G$ action on $T^*G \times V$
descends to a $G$ action on the model $Y$.

\item\labell{mu Y}
The $G$ momentum map 
$\tmu_L \colon T^*G \times V \to \g^*$
descends to a $G$ momentum map 
$$ \mu_Y \colon Y \to \g^* . $$

\item
The $G$ momentum map $\mu_Y$ coincides with the momentum map 
that is obtained from the one-form $\lambda_Y$.
\end{enumerate}

\end{Lemma}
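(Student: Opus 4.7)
The plan is to reduce the whole statement to Lemma~\ref{smooth reduction}, applied to the exact Hamiltonian $H$ manifold $(T^*G\times V,\,\omega_{T^*G}\oplus\omega_V,\,\tmu_D)$ with $H$-invariant primitive $\lambda_{\taut}\oplus\lambda_V$. The freeness hypothesis of that lemma holds for free: the $H$ action on $T^*G$ induced from right translation is already free because $H$ acts freely on $G$, so the anti-diagonal $H$ action on $T^*G\times V$ is free on all of $T^*G\times V$, hence on $Z$. Lemma~\ref{smooth reduction} then delivers items (1), (2), (3) directly.

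For items (4) and (5), the left $G$ action on $T^*G\times V$ (acting trivially on $V$) commutes with the anti-diagonal $H$ action and preserves $\omega_{T^*G}\oplus\omega_V$. The map $\tmu_D$ in~\eqref{tmuD} is $G$-invariant: left and right translations on $G$ commute, and $G$ does not act on $V$. So the $G$ action preserves $Z$ and descends to a symplectic $G$ action on $(Y,\omega_Y)$. Similarly $\tmu_L$ in~\eqref{tmuL} is invariant under the anti-diagonal $H$ action (it does not depend on $v$ and is unchanged under $\varphi\mapsto\varphi h^{-1}$ combined with $a\mapsto ah^{-1}$), so $i^*\tmu_L$ is $H$-basic and descends uniquely to $\mu_Y\colon Y\to\g^*$. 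To see that $\mu_Y$ is a momentum map for the $G$ action on $(Y,\omega_Y)$, note that the fundamental vector field $\xi_Z$ generated by $\xi\in\g$ is tangent to $Z$ (since $G$ preserves $Z$) and is $\pi$-related to $\xi_Y$. Hence
\[
\pi^*\bigl(d\mu_Y^\xi+\iota(\xi_Y)\omega_Y\bigr)=i^*\bigl(d\tmu_L^\xi+\iota(\xi_{T^*G\times V})(\omega_{T^*G}\oplus\omega_V)\bigr)=0,
\]
and injectivity of $\pi^*$ on forms (since $\pi$ is a surjective submersion) gives Hamilton's equation on $Y$. Equivariance of $\mu_Y$ with respect to the coadjoint action descends from the corresponding property of $\tmu_L$.

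For item (6), I would check that the momentum map $\iota(\xi_Y)\lambda_Y$ produced by the primitive $\lambda_Y$ agrees with $\mu_Y$. The key observation, already recorded in the paragraph introducing $\mu_L$ and in the paragraph defining $\lambda_V$, is that the momentum map for the left $G$ action on $T^*G\times V$ obtained from $\lambda_{\taut}\oplus\lambda_V$ is exactly $\tmu_L$: the momentum map from $\lambda_{\taut}$ vanishes along the zero section and so coincides with $\mu_L$, while $G$ acts trivially on $V$ so the $V$-factor contributes nothing. Pulling back along $\pi$ and using that $\xi_Z$ is the restriction of $\xi_{T^*G\times V}$ to $Z$,
\[
\pi^*\bigl(\iota(\xi_Y)\lambda_Y\bigr)=\iota(\xi_Z)\,i^*(\lambda_{\taut}\oplus\lambda_V)=i^*\bigl(\iota(\xi_{T^*G\times V})(\lambda_{\taut}\oplus\lambda_V)\bigr)=i^*\tmu_L^\xi=\pi^*\mu_Y^\xi,
\]
and injectivity of $\pi^*$ yields $\iota(\xi_Y)\lambda_Y=\mu_Y^\xi$.

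I do not anticipate a substantial obstacle: Lemma~\ref{smooth reduction} is doing all the analytic work, and what remains is bookkeeping about which groups preserve which structures. The only point that deserves care is the trivial-on-$V$ extension of the left $G$ action, because this is precisely what makes $\xi_{T^*G\times V}$ have no $V$-component and thereby identifies the $\lambda_Y$-momentum map with the descent of $\tmu_L$ in (6).
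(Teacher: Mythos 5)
Your proposal is correct and follows essentially the same route as the paper: items (1)--(3) by applying Lemma~\ref{smooth reduction} to the exact Hamiltonian $H$ manifold $(T^*G\times V,\omega_{T^*G}\oplus\omega_V,\tmu_D)$ using freeness of the anti-diagonal $H$ action, items (4)--(5) from the commuting of the two actions and the $H$-invariance of $\tmu_L$, and item (6) from the fact that $\tmu_L$ is the momentum map obtained from $\lambda_{\taut}\oplus\lambda_V$. You merely spell out a few verifications (Hamilton's equation on $Y$, freeness on all of $T^*G\times V$) that the paper leaves implicit.
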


\begin{proof}
For items (1)--(3),
note that $H$ acts freely on the zero level set $Z$,
and apply Lemma~\ref{smooth reduction}.
Item (4) follows from the fact that, on $T^*G \times V$,
the left $G$ action preserves the $H$ momentum map $\tmu_D$ 
and commutes with the anti-diagonal $H$ action.
Item (5) follows from the fact that $\tmu_L$ is $H$ invariant.
Item (6) follows from the fact that, on $T^*G \times V$,
the $G$ momentum map $\tmu_L$ 
coincides with the momentum map that is obtained from 
the one-form $\lambda_\taut \oplus \lambda_V$.
\end{proof}

We now record a property of centred models
that we use in Lemma \ref{le:local} below.

\begin{Definition} \labell{central orbit}
The \textbf{central orbit} in a centred Hamiltonian $G$-model
$Y = (T^*G \times V) /\!/ H$ is the image in $Y$ of 
(the zero section of $T^*G$)$\times$(the origin of $V$).
\end{Definition}

\begin{Lemma} \labell{muY is proper}
Let $(Y,\omega_Y,\mu_Y)$ be a centred Hamiltonian $G$ model.
Suppose that there exists a neighbourhood $U'$ of the central orbit in $Y$
such that $U' \cap \mu_Y\inv(0)$ consists of only the central orbit.
Then $\mu_Y\inv(0)$ consists of only the central orbit,
and the map $\mu_Y \colon Y \to \g^*$ is proper.
\end{Lemma}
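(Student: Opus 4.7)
The plan is to work on the cover $Z = \tmu_D^{-1}(0) \subset T^*G \times V$, exploit the explicit formula for $\tmu_L$, and use the fact that $\mu_V$ is quadratic (hence homogeneous of degree $2$) to upgrade local information to global information.

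First I would compute $\mu_Y^{-1}(0)$. Since $\mu_Y$ is induced by $\tmu_L \colon (a,a\varphi,v) \mapsto \Ad^*(a)\varphi$, and since $\Ad^*(a)\varphi = 0$ iff $\varphi = 0$, the constraint $\varphi|_\h = \mu_V(v)$ forces $\mu_V(v) = 0$. Therefore
\[
\mu_Y^{-1}(0) \ = \ \pi\bigl(\{(a,0,v) : a \in G, \ \mu_V(v)=0\}\bigr).
\]
The central orbit corresponds to $v = 0$, and a direct unfolding of the equivalence relation shows that $[e,0,v]$ lies in the central orbit iff $v = 0$. Consequently, $\mu_Y^{-1}(0)$ equals the central orbit if and only if $\mu_V^{-1}(0) = \{0\}$.

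Next, I translate the hypothesis. Since $U'$ contains the central orbit, a small $v \in V$ with $\mu_V(v)=0$ gives a point $[e,0,v] \in U' \cap \mu_Y^{-1}(0)$, which by assumption lies in the central orbit; by the computation above this forces $v = 0$. Thus $\mu_V^{-1}(0)$ contains no nonzero vectors in some neighbourhood of $0 \in V$. Now comes the key step: because $\mu_V$ is quadratic, $\mu_V(tv) = t^2 \mu_V(v)$ for all $t \in \R$, so $\mu_V^{-1}(0)$ is stable under scaling. If it contained any $v \neq 0$, it would contain arbitrarily small nonzero vectors $tv$, contradicting what we just established. Hence $\mu_V^{-1}(0) = \{0\}$, and $\mu_Y^{-1}(0)$ equals the central orbit. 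This homogeneity argument is both the cleanest part and the crux of the statement — everything else is bookkeeping.

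For properness, the same homogeneity yields more: choosing any norm $|\cdot|$ on $V$ and any norm on $\h^*$, the continuous function $v \mapsto |\mu_V(v)|$ attains a positive minimum $c$ on the unit sphere (since it is nowhere zero there), so $|\mu_V(v)| \geq c|v|^2$ for all $v$. In particular $\mu_V \colon V \to \h^*$ is proper. Finally, let $K \subset \g^*$ be compact; the lift
\[
\tmu_L^{-1}(K) \cap Z \ = \ \{(a,a\varphi,v) : \Ad^*(a)\varphi \in K, \ \varphi|_\h = \mu_V(v)\}
\]
is a closed subset of $G \times \Ad^*(G)(K) \times \mu_V^{-1}(K')$, where $K' \subset \h^*$ is the compact image of $\Ad^*(G)(K)$ under restriction to $\h$. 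Since $G$ and $\Ad^*(G)(K)$ are compact and $\mu_V^{-1}(K')$ is compact by properness of $\mu_V$, this set is compact. Its image under the continuous quotient map $\pi$ is $\mu_Y^{-1}(K)$, which is therefore compact, proving that $\mu_Y$ is proper.
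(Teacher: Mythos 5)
Your proof is correct and follows essentially the same route as the paper's: reduce the problem to showing $\mu_V^{-1}(0)=\{0\}$ using the homogeneity of the quadratic map $\mu_V$, deduce the estimate $\|\mu_V(v)\|\geq c\|v\|^2$ and hence properness of $\mu_V$, and then lift this to properness of $\mu_Y$ via compactness of $G$. The only cosmetic difference is in the first step: the paper encodes the cone property of $\mu_V^{-1}(0)$ as connectedness of $\mu_Y^{-1}(0)/G$ and combines it with the isolated-orbit hypothesis, whereas you scale a putative nonzero element of $\mu_V^{-1}(0)$ directly into $U'$ --- the same use of homogeneity.
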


\begin{proof}
In the notation of~\eqref{tmuL} and~\eqref{tmuD},
$\mu_Y\inv(0)$ is the quotient of
$\tmu_L\inv(0) \cap \tmu_D\inv(0)$ by the anti-diagonal $H$ action.
Because $\tmu_L\inv(0) \cap \tmu_D\inv(0) = (\text{the zero section
 in } T^*G) \times \mu_V\inv(0) $
and $\mu_V$ is quadratic, we deduce that
$\mu_Y\inv(0)/G \left( \cong \mu_V\inv(0)/H \right) $ is connected.

By assumption, the central orbit is an isolated point in $\mu_Y\inv(0)$/G.  
This implies that $\mu_Y\inv(0)$ consists of only the central orbit.

In particular, $\mu_V\inv(0) = \{ 0 \}$.
So $m := \min \big\{  \| \mu_V(v) \| \ | \ \| v \| = 1  \big\}$
(with respect to any norm on $V$) is positive.
By homogeneity, $\| \mu_V(v) \| \geq m \| v\|^2$ for all $v$.
This implies that $\mu_V \colon V \to \h^*$ is proper.
From this and the compactness of $G$ we further deduce
that $(\tmu_L,\tmu_D) \colon T^*G \times V \to \g^* \times \h^*$,
and hence $\mu_Y \colon Y \to \g^*$, is proper.
\end{proof}

\medskip

We end this section with an alternative description of the model $Y$
that we use in Section~\ref{sec:properties}.

Fix an $\Ad$-invariant inner product on $\g$, 
and use it to embed $\h^*$ in $\g^*$ as the orthogonal complement
of $\h^0$, the annihilator of $\h$ in $\g^*$.
Consider the action of $h \in H$ on $G \times (\h^0 \oplus V)$
by right multiplication by $h^{-1}$ on the $G$ factor,
by the coadjoint action on the $\h^0$ factor,
and by the given representation on the $V$ factor.
Our alternative model is the quotient,
$$G \times_H \left( \h^0 \times V \right),$$
with $G$ acting by left multiplication on the $G$ factor.
We have a pull-back diagram:
$$\xymatrix{
 G \times (\h^0 \times V) \ar[r] \ar[d] & \tmu_D\inv(0) \ar[d] 
                            & \subset T^*G \times V \\
 G \times_H ( \h^0 \times V ) \ar[r] & (T^*G \times V)/\!/H  & = Y &
}$$
The bottom right term is the model $Y$.
The top arrow is the map 
\begin{equation}\labell{a nu v}
  (a,\nu,v) \mapsto (a, \Ad^*(a)(\nu+\mu_V(v)),v).
\end{equation}
The bottom arrow is a $G$ equivariant diffeomorphism;
pulling back $\omega_Y$ by it, we get a $G$ invariant symplectic form
on $G \times_H (\h^0\oplus V)$ 
whose pullback to $G \times (\h^0 \oplus V)$
coincides with the pullback of $\omega_{T^*G} \oplus \omega_V$
under the top arrow.

\begin{Lemma}\ \labell{alternative model}
\begin{enumerate}
\item\labell{omega V} The pullback of this symplectic form 
on $G \times_H (\h^0 \times V)$ 
under the inclusion map $v \mapsto [1,0,v]$ of $V$
is $\omega_V$. 
\item\labell{isotropic} The zero section $G \times_H \{ 0 \} $ 
of $G \times_H (\h^0 \times V)$ is isotropic.
\item\labell{momentum map} The momentum map for the $G$-action 
on $G \times_H (\h^0 \times V)$ with respect to the symplectic form 
described above is
\[[a,\nu,v] \mapsto\Ad^*(a)(\nu + \mu_V(v)).\]
\end{enumerate}
\end{Lemma}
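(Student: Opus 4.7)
The plan is to reduce each claim to a computation in $T^*G \times V$ via the pullback square preceding the statement. Write $q \colon G \times (\h^0 \times V) \to G \times_H (\h^0 \times V)$ for the quotient, $\Psi$ for the top arrow~\eqref{a nu v}, $\Phi$ for the bottom arrow (the diffeomorphism onto $Y$), and $i \colon Z \hookrightarrow T^*G \times V$, $\pi \colon Z \to Y$ for the maps from Lemma~\ref{structure descends}. The diagram commutes, $\pi \circ \Psi = \Phi \circ q$, and by the very definition of the symplectic form on $G \times_H (\h^0 \times V)$ together with Lemma~\ref{structure descends}(2), its $q$-pullback equals $\Psi^*(\omega_{T^*G} \oplus \omega_V)$. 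Every claim of the lemma thus reduces to evaluating this pullback along an explicit horizontal lift.

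For part~(1), the inclusion $v \mapsto [1,0,v]$ lifts through $q$ to $v \mapsto (1,0,v)$, and $\Psi(1,0,v) = (1,\mu_V(v),v)$ since $\Ad^*(1)=\id$. The $T^*G$ component lies in the single fibre $T_1^*G$, which is Lagrangian, so $\omega_{T^*G}$ pulls back to zero; the $V$ component is the identity, so the total pullback is $\omega_V$. For part~(2), the preimage of $G \times_H \{0\}$ under $q$ is $G \times \{(0,0)\}$, and $q$ restricted there is the surjective submersion $G \to G/H$. Since $\mu_V(0)=0$, we have $\Psi(a,0,0)=(a,0_a,0)$, which lies in (zero section of $T^*G$)$\,\times\,\{0\}$. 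The zero section of $T^*G$ is Lagrangian, and $\omega_V$ pulls back to zero along a constant map, so $\Psi^*(\omega_{T^*G}\oplus\omega_V)$ vanishes on $G\times\{0\}$; since pullback by a surjective submersion is injective on forms, this forces the symplectic form on $G \times_H (\h^0 \times V)$ to vanish on $G\times_H\{0\}$.

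For part~(3), Lemma~\ref{structure descends}(5) asserts $\mu_Y \circ \pi = \tmu_L \circ i$, so combined with $\pi \circ \Psi = \Phi \circ q$ we obtain
\[ \mu_Y \circ \Phi \circ q \;=\; \tmu_L \circ \Psi. \]
A direct check (using $\mu_V(v) \in \h^* \subset \g^*$ and $\nu \in \h^0$ to see that $\Psi$ lands in $\tmu_D^{-1}(0)$) together with~\eqref{tmuL} gives $\tmu_L(\Psi(a,\nu,v)) = \Ad^*(a)(\nu+\mu_V(v))$, and the surjectivity of $q$ yields the stated formula for the momentum map of~(3).

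The main obstacle is purely bookkeeping: one must unravel the trivialization convention implicit in~\eqref{a nu v} (for instance, reading the middle coordinate via the right-invariant trivialization, which corresponds to $\varphi = \nu + \mu_V(v)$ in the left-invariant one) so that $\Psi$ indeed lands in $\tmu_D^{-1}(0)$ and $\tmu_L$ of the image comes out to $\Ad^*(a)(\nu+\mu_V(v))$ rather than to some conjugate of it. Once this is pinned down, each of (1)--(3) follows from a one-line pullback computation on the appropriate submanifold of $T^*G\times V$.
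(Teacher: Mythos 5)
Your proof is correct and takes essentially the same route as the paper's: part (1) via the pullback of $\omega_{T^*G}\oplus\omega_V$ under $v\mapsto(1,\mu_V(v),v)$ and the fact that $T_1^*G$ is Lagrangian, part (2) from the image of the zero section landing in the Lagrangian zero section of $T^*G$ times the origin, and part (3) by composing $\mu_Y$ with the map induced by~\eqref{a nu v}. Your extra care about the trivialization convention in~\eqref{a nu v} (so that the image lies in $\tmu_D^{-1}(0)$ and $\tmu_L$ evaluates to $\Ad^*(a)(\nu+\mu_V(v))$) is a correct and worthwhile clarification of a point the paper leaves implicit.
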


\begin{proof}To prove item (\ref{omega V}), observe that this pullback form agrees with the pullback of $\om_{T^*G}\oplus\om_V$ under the map $V\ni v\mapsto(1,\mu_V(v),v)\in T^*G\x V$. This pullback equals $\om_V$, since the fiber $T^*_1G$ is Lagrangian in $T^*G$. This proves (\ref{omega V}).

Item (\ref{isotropic}) follows from the definition of the symplectic form. Item (\ref{momentum map}) follows from the fact that this momentum map is the composition of $\mu_Y$ (defined by (\ref{tmuL}) and Lemma \ref{structure descends}(\ref{mu Y})) with the map induced by (\ref{a nu v}).
\end{proof}

\bigskip

{

The notion of a centred Hamiltonian $G$ model comes from the local normal
form theorem.  
(A special case of) the local normal form theorem can then be stated 
as follows.

\begin{Theorem} \labell{lnf isotropic} 
Let a compact Lie group $G$ act on a symplectic manifold $(M,\omega)$
with a ($G$-equivariant) momentum map $\mu \colon M \to \g^*$,
and let $G \cdot x$ be an orbit that is contained in the 
zero level set $\mu\inv(0)$.
Then there exist a centred Hamiltonian $G$-model $Y$ and
a $G$-equivariant symplectomorphism from an invariant open neighbourhood
of $G \cdot x$ in $M$
to an invariant open subset in $Y$
that takes $G \cdot x$ to the central orbit in $Y$.
\end{Theorem}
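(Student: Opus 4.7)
The plan is to reduce the statement to an equivariant Weinstein/Moser argument after extracting the correct linear data at a point on the orbit.

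First I would set $H:=G_x$. Since $\mu(x)=0$ and $0\in\g^*$ is a coadjoint fixed point, equivariance of $\mu$ forces $\mu\equiv 0$ on $G\cdot x$; Hamilton's equation then implies that $W:=T_x(G\cdot x)$ is isotropic in $(T_xM,\om_x)$, so $V:=W^\om/W$ inherits an $H$-invariant symplectic form and a canonical quadratic $H$-momentum map $\mu_V\colon V\to\h^*$ vanishing at the origin. Build the centred Hamiltonian $G$-model $Y=(T^*G\x V)/\!/H$ of Definition~\ref{def:centred model} from $(H,V,\mu_V)$. By Lemma~\ref{alternative model}, a neighbourhood of the central orbit in $Y$ is modelled by $G\x_H(\h^0\oplus V)$, in which the central orbit is the isotropic zero section $\iso G/H$ and $V$ appears as a symplectic slice carrying $\om_V$. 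In particular $(Y,\om_Y)$ near the central orbit and $(M,\om)$ near $G\cdot x$ carry isomorphic $H$-equivariant symplectic normal bundle data.

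Next I would apply the equivariant tubular neighbourhood theorem to produce a $G$-equivariant diffeomorphism $\Phi$ from a neighbourhood of $G\cdot x$ in $M$ onto a neighbourhood of the central orbit in $Y$ that realises this identification of normal data; in particular, the differential of $\Phi$ at each point of $G\cdot x$ is a symplectic linear isomorphism. Then $\om$ and $\Phi^*\om_Y$ are $G$-invariant symplectic forms on an invariant neighbourhood of $G\cdot x$ agreeing on $TM|_{G\cdot x}$. An equivariant Moser argument finishes: use an equivariant relative Poincar\'e lemma (via a $G$-equivariant tubular retraction onto $G\cdot x$, obtained by averaging) to write $\om-\Phi^*\om_Y=d\al$ with $\al$ a $G$-invariant one-form vanishing on $G\cdot x$; then integrate the time-dependent vector field $X_t$ defined by $\iota(X_t)\om_t=-\al$, where $\om_t$ is the convex combination (non-degenerate near $G\cdot x$ because it agrees with $\om$ on the orbit), over $t\in[0,1]$ on a possibly smaller invariant neighbourhood.

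The composite is the desired $G$-equivariant symplectomorphism, and it automatically intertwines the momentum maps: for $G$ compact acting on a connected Hamiltonian manifold, the equivariant momentum map is unique up to a constant in $(\g^*)^G$, and since both $\mu$ and $\mu_Y$ vanish on the (central) orbit, the constant is zero. The main obstacle is the equivariant Moser step — specifically, producing an invariant primitive $\al$ that vanishes along $G\cdot x$ — but this is routine once one has a $G$-equivariant deformation retraction of the tubular neighbourhood onto the orbit, supplied by the equivariant tubular neighbourhood construction together with averaging over $G$.
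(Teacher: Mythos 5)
The paper does not prove Theorem~\ref{lnf isotropic} at all: it is quoted as a known result, with the proof deferred to Guillemin--Sternberg \cite{GS1984} and Marle \cite{marle1985}. So there is no in-paper argument to compare against; what you have written is essentially the standard proof from those references, and it is sound as a sketch. Your route --- extract the isotropy group $H=G_x$ and the symplectic slice $V=W^\om/W$, observe via equivariance and Hamilton's equation that the orbit is isotropic, build the model $Y=(T^*G\times V)/\!/H$ from this linear data, match neighbourhoods by an equivariant tubular neighbourhood theorem, and finish with an equivariant relative Poincar\'e lemma plus Moser deformation --- is exactly the Marle/Guillemin--Sternberg strategy (an equivariant version of Weinstein's isotropic embedding theorem). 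The one step you compress more than you should is the claim that the tubular neighbourhood map can be arranged to have symplectic differential along the whole orbit: the equivariant tubular neighbourhood theorem only identifies normal bundles, so you must additionally invoke the $H$-equivariant linear normal form for a symplectic vector space containing the isotropic subspace $W$ (namely $T_xM\cong W\oplus W^*\oplus V$ with $W^*\cong\h^0$, obtained by averaging a Lagrangian complement) to fix the identification of the \emph{full} tangent space at $x$, and then propagate it along the orbit by equivariance; this is standard but is the point where the pairing between the orbit directions and the $\h^0$ directions gets matched. Your closing remark on intertwining the momentum maps (uniqueness of the equivariant momentum map up to a locally constant $\Ad^*$-fixed shift, killed because both maps vanish on the orbit and every component of the tube meets the orbit) is a welcome addition, since the paper's application in Lemma~\ref{le:local} implicitly uses that stronger form of the statement.
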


The local normal form theorem is the main ingredient 
in the work of Sjamaar-Lerman \cite{SL}
and in Sjamaar's de Rham theory for symplectic quotients
\cite{sjamaar} (see Section~\ref{sec:sjamaar}).
The theorem in its more general form applies to neighbourhoods
of orbits that are not necessarily in the zero level set $\mu\inv(0)$
and, unless the orbit is isotropic, it involves Hamiltonian $G$ models
that are more general than the centred Hamiltonian $G$ models
of Definition \ref{def:centred model}.
The theorem, due to Guillemin-Sternberg and Marle,
can be found in \cite{GS1984,marle1985}.
We use the local normal form theorem in Section~\ref{sec:sjamaar} 
and in the proof of Lemma~\ref{le:local}.

%

\section{The zero level set of the momentum map and Sjamaar's de Rham theory 
for symplectic quotients}
\labell{sec:sjamaar}

The purpose of this section is to prove Proposition~\ref{sjamaar},
which is later used in the proof of Lemma \ref{le:local}.

We will use a de Rham theory
for singular symplectic quotients that was introduced by R.~Sjamaar 
in~\cite{sjamaar}. To explain it, we first recall some facts 
from the paper~\cite{SL} of Sjamaar and Lerman;
specifically, see \cite [Theorems 2.1 and 5.9]{SL}.

Let $G$ be a compact Lie group,
and let $(M,\omega,\mu)$ be a Hamiltonian $G$ manifold.

The reduced space $\mu\inv(0)/G$ is locally connected; 
this is a consequence of the local normal form theorem 
(Theorem \ref{lnf isotropic}).
So the connected components of the reduced space
are open and closed in the reduced space,
and their preimages in $M$ are closed in $M$.

Let $x \in \mu\inv(0)/G$.
There exist 
a conjugacy class $C$ of closed subgroups of $G$,
and a neighbourhood $U$ of $x$ in $\mu\inv(0)/G$,
such that the set of orbits in $U$ with stabilizers in $C$
is open and dense in $U$.
(We say that a $G$ orbit has stabilizers in $C$ 
if the stabilizers of the points in the orbit are in $C$.
If this is true for one point in the orbit, then it is true 
for all the points in the orbit.)

Fix a connected component $X$ of $\mu\inv(0)/G$.

It follows that there exists a unique conjugacy class $C_X$ 
of closed subgroups of $G$ such that the ``principal stratum''
$$ X_\princ := \{ x \in X \ | \ x \text{ has stabilizers in $C_X$ } \}$$
is open and dense in $X$.
Moreover, $X_\princ$ is a manifold, in following sense.
Let $Z_\princ$ be the preimage of $X_\princ$ in $\mu\inv(0)$.
Then there exist unique manifold structures on $X_\princ$ and on $Z_\princ$
such that the inclusion map $Z_\princ \to M$ is an immersion
and such that the projection map $Z_\princ \to X_\princ$ is a submersion.

The \textbf{dimension} of $X$ is the dimension of $X_\princ$.

Following Sjamaar \cite{sjamaar}, 
we define a \textbf{differential form on $X$} to be 
a differential form on $X_\princ$ whose pullback to $Z_\princ$ 
extends to a differential form on $M$. 
(Because the preimage $Z$ of $X$ in $\mu\inv(0)$ is closed in $M$,
a differential form on $Z_\princ$ extends to a differential form to $M$
if and only if it every point on $Z$ has a neighbourhood
on which this differential form extends.)

The space of differential forms on $X$ is a differential graded algebra,
with the usual operations of exterior derivative and wedge products.
We denote it $\Omega(X)$.

The symplectic form $\omega$ on $M$ descends to a two-form $\omega_X$ on $X$.
When $Z$ is a manifold and the $G$ action on $Z$ is free,
this is exactly the Marsden-Weinstein reduction.

\begin{Lemma} \labell{omega not exact}
Let $G$ be a compact Lie group, 
let $(M,\omega,\mu)$ be a Hamiltonian $G$ manifold,
and let $X$ be a connected component of $\mu\inv(0)/G$.  
If $X$ is compact and of dimension $\geq 2$,
then the two-form $\omega_X$ is not exact 
in the differential complex $\Omega(X)$.
\end{Lemma}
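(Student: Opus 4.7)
The plan is to adapt the classical fact that on a compact symplectic manifold of dimension $2n$ the top power of the symplectic form is a volume form, hence represents a non-zero cohomology class. The principal stratum $X_{\princ}$ is itself a symplectic manifold whose dimension equals $\dim X$, so the question reduces to an integration statement on $X_{\princ}$ together with Stokes' theorem for Sjamaar's complex.

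Concretely, I write $2n:=\dim X$, so that $n\geq 1$ by the hypothesis $\dim X\geq 2$. The restriction of $\omega_X$ to $X_{\princ}$ is a genuine symplectic form, so $\omega_X^n|_{X_{\princ}}$ is a nonvanishing volume form. The aim is to show that
\[ V\ :=\ \int_{X_{\princ}}\omega_X^n \]
is a finite and strictly positive number. Then, assuming for contradiction that $\omega_X=d\alpha$ with $\alpha\in\Omega^1(X)$, one gets $\omega_X^n=d(\alpha\wedge\omega_X^{n-1})$ in the differential graded algebra $\Omega(X)$, and Stokes' theorem will force $V=0$, a contradiction.

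Finiteness of $V$ follows from compactness of $X$ combined with the local normal form theorem (Theorem~\ref{lnf isotropic}), which describes a neighbourhood of each $G$-orbit in $\mu^{-1}(0)$ in terms of a quadratic momentum-map reduction $\mu_V^{-1}(0)/H$ on a symplectic vector space. Near such a local model the form $\omega_X^n$ is pulled back from a smooth form on the ambient $M$ and is therefore bounded, so covering $X$ by finitely many such neighbourhoods and a partition of unity bounds the total integral. Positivity of $V$ is automatic since the integrand is a volume form on the open dense subset $X_{\princ}$.

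The main obstacle is to justify Stokes' theorem in the singular complex $\Omega(X)$, because $X_{\princ}$ is an open manifold whose ``boundary'' is the union of the lower-dimensional strata. What makes the argument go through is that, by the local normal form and the even-dimensionality of symplectic strata, each non-principal stratum has real codimension at least two in $X$. Exhausting $X_{\princ}$ by the complements of shrinking tubular neighbourhoods of the lower strata, one controls the resulting boundary terms and concludes $\int_{X_{\princ}}d\beta=0$ for every $\beta\in\Omega^{2n-1}(X)$. This is exactly the content of Sjamaar's Stokes theorem in~\cite{sjamaar}, so the proof reduces to invoking his results and applying them to $\beta=\alpha\wedge\omega_X^{n-1}$.
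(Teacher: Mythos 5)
Your argument is correct and is in substance the same as the paper's: the paper disposes of the lemma by citing \cite[Corollary~7.6]{sjamaar}, which asserts that the class of $\omega_X^k$ (with $k=\tfrac12\dim X\geq 1$) is nonzero in the cohomology of $\Omega(X)$, and the integration-plus-Stokes argument you sketch is exactly the content behind that corollary. The only difference is that you unpack the citation one level further --- finiteness and positivity of $\int_{X_{\princ}}\omega_X^n$ together with Sjamaar's Stokes theorem for the singular complex --- rather than quoting the non-exactness statement directly, so both proofs ultimately rest on the same results of \cite{sjamaar}.
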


\begin{proof}
Let $k = \frac{1}{2} \dim X$.  Assume that $k\geq1$. 
By~\cite[Corollary~7.6]{sjamaar}, 
the class of $\omega_X^k$ in the cohomology of the differential complex 
$\Omega(X)$ is nonzero. 
So $\omega_\red^k$, and hence $\omega_\red$, is not exact.
\end{proof}

\begin{Lemma} \labell{omega exact}
Let $G$ be a compact Lie group, let $(M,\omega,\mu)$ 
be an exact Hamiltonian $G$ manifold,
and let $X$ be a connected component of $\mu\inv(0)/G$.
Then the reduced form $\omega_X$ is exact in $\Omega(X)$.
\end{Lemma}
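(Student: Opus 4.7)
The plan is to construct a one-form $\lambda_X \in \Omega(X)$ with $d\lambda_X = \omega_X$, mimicking Lemma~\ref{smooth reduction} on the principal stratum and then checking that the output satisfies Sjamaar's extension condition. Fix a $G$-invariant primitive $\lambda$ of $\omega$ such that $\mu^\xi = \iota(\xi_M)\lambda$ for every $\xi\in\g$; such a $\lambda$ exists by the definition of an exact Hamiltonian $G$ manifold.

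Let $Z_\princ$ be the preimage of $X_\princ$ in $\mu\inv(0)$, with inclusion $i\colon Z_\princ\hookrightarrow M$ and quotient submersion $\pi\colon Z_\princ\to X_\princ$. Since $G$ preserves $Z_\princ$, the restriction $\xi_{Z_\princ}$ of $\xi_M$ to $Z_\princ$ is well defined for each $\xi\in\g$. I claim that the pulled-back one-form $i^*\lambda$ is $G$-basic: it is $G$-invariant because $\lambda$ is, and for every $\xi\in\g$,
\[ \iota(\xi_{Z_\princ})\, i^*\lambda \;=\; i^*\bigl(\iota(\xi_M)\lambda\bigr) \;=\; i^*\mu^\xi \;=\; 0, \]
because $Z_\princ\subseteq\mu\inv(0)$. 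Since the vertical tangent space of $\pi$ at any point is spanned by the orbit directions $\xi_{Z_\princ}$, and since $Z_\princ$ has a single orbit type, a standard argument (essentially the same as in Lemma~\ref{smooth reduction}, with the constant-orbit-type structure replacing the assumption of a free action) shows that $i^*\lambda$ descends to a unique one-form $\lambda_X$ on $X_\princ$ with $\pi^*\lambda_X = i^*\lambda$. This $\lambda_X$ lies in $\Omega(X)$ in Sjamaar's sense: the required extension of $\pi^*\lambda_X = i^*\lambda$ from $Z_\princ$ to a smooth form on $M$ is furnished by $\lambda$ itself.

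To conclude, I compute
\[ \pi^*(d\lambda_X) \;=\; d(\pi^*\lambda_X) \;=\; d(i^*\lambda) \;=\; i^*\omega \;=\; \pi^*\omega_X, \]
so $d\lambda_X = \omega_X$ on $X_\princ$, and hence as elements of $\Omega(X)$ since differential forms on $X$ are determined by their restrictions to $X_\princ$. The only technical subtlety, which I expect to be the main obstacle, is the smooth descent of the basic form $i^*\lambda$ across the non-free principal stratum; this is handled either by a slice argument or by viewing $\pi\colon Z_\princ\to X_\princ$ as a locally trivial fibre bundle with fibre $G/H$ for $H\in C_X$, after which the remaining verifications are formal.
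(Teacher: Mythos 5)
Your proposal is correct and follows essentially the same route as the paper: restrict the invariant primitive $\lambda$ (with $\mu^\xi=\iota(\xi_M)\lambda$) to $Z_\princ$, observe it is horizontal because $\mu$ vanishes there and hence $G$-basic, descend it to $\lambda_X$ with $\lambda$ itself witnessing Sjamaar's extension condition, and conclude $d\lambda_X=\omega_X$. Your additional remarks on descent along the constant-orbit-type bundle $Z_\princ\to X_\princ$ merely spell out a step the paper leaves implicit.
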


\begin{proof}
Let $\lambda$ be a $G$ invariant one-form on $M$ such that $\omega = d\lambda$
and such that $\mu^\xi = \lambda(\xi_M)$ for all $\xi \in \g$.
Let $Z_\princ$ be the preimage of $X_\princ$ in $\mu\inv(0)$.
Because $\mu=0$ on $Z_\princ$,
the pullback of the one-form $\lambda$ to $Z_\princ$ is horizontal
and therefore $G$-basic. 
Hence this pullback descends to a one-form $\lambda_X$ on $X$.
Because $\omega=d\lambda$ on $M$, we have $\omega_X = d \lambda_X$ on $X$.
\end{proof}

\begin{Proposition} \labell{sjamaar}
Let $G$ be a compact Lie group,
and let $(M,\omega,\mu)$ be an exact Hamiltonian $G$ manifold.
Let $X$ be a connected component of the reduced space $\mu\inv(0)/G$.
Suppose that $X$ is compact.  
Then $X$ consists of one point, which is isolated in $\mu\inv(0)/G$.
\end{Proposition}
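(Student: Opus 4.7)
The plan is to combine Lemma \ref{omega not exact} with Lemma \ref{omega exact} to force $\dim X = 0$, and then use the local connectedness of the reduced space to conclude that a single-point connected component is isolated.

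First I would observe that the principal stratum $X_\princ$ inherits a genuine symplectic form from the Marsden-Weinstein reduction: on $X_\princ$, the preimage $Z_\princ$ in $\mu\inv(0)$ is a smooth manifold on which the $G$-action is locally free with stabilizers in the single conjugacy class $C_X$, so after quotienting by the identity component of the stabilizer and then by the finite group, the usual reduction argument shows that $\omega_X|_{X_\princ}$ is non-degenerate. In particular, $\dim X = \dim X_\princ$ is even.

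Next, assume for contradiction that $\dim X \geq 2$. Since $X$ is compact by hypothesis, Lemma \ref{omega not exact} asserts that the reduced two-form $\omega_X$ is \emph{not} exact in Sjamaar's differential complex $\Omega(X)$. On the other hand, since $(M,\omega,\mu)$ is exact Hamiltonian, Lemma \ref{omega exact} says that $\omega_X$ \emph{is} exact in $\Omega(X)$. This contradiction forces $\dim X < 2$, and combined with the parity observation we get $\dim X = 0$.

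Finally, a $0$-dimensional connected component $X$ of $\mu\inv(0)/G$ is a single point: $X_\princ$ is dense in $X$, and if $\dim X_\princ = 0$ then $X_\princ$ is discrete, so by connectedness $X = X_\princ$ is a single point. To see this point is isolated in $\mu\inv(0)/G$, recall from the discussion preceding Lemma \ref{omega not exact} that the reduced space is locally connected, and hence its connected components are open; so the one-point component $X$ is an isolated point of $\mu\inv(0)/G$. The only non-routine step is the parity claim, but it is a direct consequence of the local normal form (Theorem \ref{lnf isotropic}) applied along $Z_\princ$, so no real obstacle is anticipated.
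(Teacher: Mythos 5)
Your proposal is correct and follows essentially the same route as the paper: combine Lemma~\ref{omega not exact} with Lemma~\ref{omega exact} to force $\dim X = 0$, then use connectedness of $X$ and local connectedness of $\mu\inv(0)/G$ to conclude. Your explicit parity argument (evenness of $\dim X$ via the symplectic structure on $X_\princ$) fills in a small step the paper leaves implicit when it passes from the hypothesis $\dim X \geq 2$ of Lemma~\ref{omega not exact} to the conclusion $\dim X = 0$.
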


\begin{proof}
By Lemma~\ref{omega not exact}, if $X$ is compact and $\dim X > 0$
then $\omega_X$ is not exact in $\Omega(X)$.
By Lemma~\ref{omega exact}, $\omega_\red$ is exact in $\Omega(X)$.
Thus, $\dim X = 0$.
Because $X$ is connected, $X$ consists of a single point.
Because $\mu\inv(0)/G$ is locally connected,
this point is isolated in $\mu\inv(0)/G$.
\end{proof}

\begin{Remark}
It follows that, if $M/G$ is connected and $\mu$ is proper,
then $\mu\inv(0)$ consists of at most one $G$ orbit.
This is because these additional assumptions imply that $\mu\inv(0)/G$ 
is connected.
(This follows from Kirwan's Morse-type theory
for the norm-square of the momentum map,
or it can be deduced from connectedness and convexity results
for abelian groups as in \cite{LMTW}. 
In \cite{LMTW} it is assumed that $G$ and $M$ are connected;
we get the general case by applying this special case 
to the action of the identity component of $G$
on a connected component of $M$.)
\eor
\end{Remark}

\section{Retracting $M$}
\labell{sec:retracting}

In our main theorem, Theorem \ref{main}, 
we assumed that the momentum map $\mu \colon M \to \g^*$ is proper.
When $(M,\omega,\mu)$ is exact, it is enough to assume that
$\mu$ is proper as a map to an open subset $\calT$ of $\g^*$
that is starshaped about the origin.  
This weaker property comes up,
for example, when we start with a Hamiltonian $G$ action
on a compact symplectic manifold
and obtain $M$ by restricting to the momentum map preimage of $\calT$.
From now on we will work with this weaker assumption,
but the reader is welcome to restrict attention everywhere 
to the special case $\calT = \g^*$.

We recall some facts about vector fields and flows.
Good references are the textbooks by John Lee \cite{johnlee}
and by Br\"ocker and J\"anich \cite{BJ}.

Recall that a \textbf{flow} on a manifold or manifold with boundary $Y$
is a smooth map $(t,y) \mapsto \rho_t(y)$ to $Y$,
defined on a \textbf{flow domain} -- an open subset $D$
of $\R \times Y$ such that for every $y \in Y$
the set $\{ t \in \R \ | \ (t,y) \in D \}$ 
is a interval that contains the origin -- 
such that $\rho_{t+s}(y) = \rho_t(\rho_s(y))$,
in the sense that if the right hand side is well defined
then so is the left hand side and they are equal.  
We say that a flow is defined for all $t \geq 0$
if its flow domain contains $\R_{\geq 0} \times Y$.

By the fundamental theorem of ordinary differential equations,
if $X$ is a vector field on $Y$, then there exists a unique flow on $Y$,
called the \textbf{flow of $X$}, that has the following properties.
For any $x_0 \in X$, interval $I$ of the form $[0,b)$ or $(a,0]$,
and smooth curve $x \colon I \to Y$,
the curve satisfies $x(0) = x_0$ and $\dot x = X \circ x$
if and only if $I \subseteq \{ t \in \R \ | \ (t,x_0) \in D \}$
and $x(t) = \rho_t(x_0)$ for all $t \in I$.

Given a map $f \colon Y_1 \to Y_2$, we say that a flow $\rho^{Y_1}_t$
on $Y_1$ \textbf{lifts} a flow $\rho^{Y_2}_t$ on $Y_2$
if the flow domain of $\rho^{Y_1}_t$ is the preimage
under $(t,y) \mapsto (t,f(y))$ of the flow domain of $\rho^{Y_2}_t$
and if $f(\rho^{Y_1}_t(y)) = \rho^{Y_2}_t(f(y))$
whenever the left and right hand sides of this equation are defined. %

We recall that if an integral curve for a vector field 
does not exist for all times then it leaves every compact set:

\begin{lemma}\labell{le:extend} 
Let $M$ be a smooth manifold without boundary, $X$ a vector field on $M$, 
\ $x_0\in M$, $t > 0$, and $x \colon [0,t)\to M$ a smooth curve such that
\begin{equation}\labell{eq:x 0 dot x}
   x(0)=x_0,\quad\dot x=X\circ x.
\end{equation}
If the closure of $x([0,t))$ is compact, then $x$ extends 
to a smooth solution of (\ref{eq:x 0 dot x}) that is defined 
on $[0,t')$ for some number $t'>t$.
\end{lemma}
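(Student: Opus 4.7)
The plan is to reduce this to the standard escape lemma from ordinary differential equation theory: I will use compactness of the closure $K := \overline{x([0,t))}$ together with the uniform local existence theorem for ODEs to show both that $\lim_{s\to t^-} x(s)$ exists and that the integral curve can be continued past~$t$.

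First I would establish a uniform local existence time for $X$ on the compact set~$K$. By the fundamental theorem on flows, every $p \in K$ is contained in a flow-box chart in which the flow of $X$ exists for some time $\varepsilon_p > 0$ starting from any point in a neighbourhood of $p$. Since $K$ is compact, it is covered by finitely many such charts, and I can take $\varepsilon > 0$ to be the minimum of the corresponding times and radii. This gives: for every $y \in K$ there is an integral curve $s \mapsto \rho_s(y)$ of $X$ defined on $[0, \varepsilon]$, depending smoothly on $y$.

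Next I would choose $s_0 \in [0, t)$ with $t - s_0 < \varepsilon$, using that $x(s_0) \in K$. Apply the uniform existence result to get a solution $s \mapsto \rho_{s-s_0}(x(s_0))$ of $\dot y = X \circ y$ defined on $[s_0, s_0 + \varepsilon)$. By uniqueness of solutions to the initial value problem, this agrees with $x(s)$ on the overlap $[s_0, t)$, because both are integral curves of $X$ passing through $x(s_0)$ at time $s_0$. Setting $t' := s_0 + \varepsilon > t$ and defining $\tilde x \colon [0, t') \to M$ to equal $x$ on $[0, s_0]$ and $\rho_{s - s_0}(x(s_0))$ on $[s_0, t')$ yields a smooth extension of $x$ satisfying the ODE on all of $[0, t')$. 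Smoothness across $t$ is automatic, as $\tilde x$ is smooth on each piece $[0, s_0]$ and $[s_0, t')$ of an open cover of $[0, t')$.

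I do not expect any serious obstacle: the entire argument is formal once the uniform existence time $\varepsilon > 0$ is produced, and that step is exactly the content of the flow-box theorem combined with a finite subcover. The only point requiring slight care is verifying that the ``patched'' curve $\tilde x$ really is a solution on the overlap, which is immediate from uniqueness. No new ideas beyond standard ODE machinery are needed.
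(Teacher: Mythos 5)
Your proof is correct and is precisely the standard argument behind the ``Escape Lemma'' that the paper does not reproduce but simply cites (Lee, Lemma~17.10, and Br\"ocker--J\"anich, p.~84): a uniform local existence time on the compact closure obtained from the flow-box theorem plus a finite subcover, followed by gluing the curve to the local flow via uniqueness of integral curves. There is no discrepancy with the paper's approach.
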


\begin{proof}
This follows for example from the argument on p.~84 in \cite{BJ}
or in the ``Escape Lemma'' \cite[Lemma~17.10]{johnlee}.
\end{proof}

The next lemma will be used in the proofs of Lemmata \ref{retraction} 
and \ref{liouville} below.

\begin{Lemma} \labell{flow lifts}
Let $X_1$ be a vector field on $Y_1$, let $X_2$ be a vector field on $Y_2$,
and let $f \colon Y_1 \to Y_2$ be a smooth map whose differential intertwines
$X_1$ with $X_2$. Then the image of the flow domain of $X_1$ under the map $\id_\R\times f$ is contained in the flow domain of $X_2$. 

If $f$ is proper then the flow of $X_1$ lifts the flow of $X_2$. 
In particular, if the flow of $X_2$ is defined for all $t \geq 0$ 
then so is the flow of $X_1$.
\end{Lemma}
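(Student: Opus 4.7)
The plan is to prove the two inclusions of flow domains separately. For the first (unconditional) claim, I would push integral curves of $X_1$ forward via $f$: if $x \colon I \to Y_1$ is an integral curve of $X_1$ with $x(0) = y$, then the chain rule combined with the hypothesis $df \circ X_1 = X_2 \circ f$ gives
\[
\frac{d}{dt}(f \circ x)(t) \;=\; df_{x(t)}\bigl(X_1(x(t))\bigr) \;=\; X_2\bigl(f(x(t))\bigr),
\]
so $f \circ x$ is an integral curve of $X_2$ starting at $f(y)$. By the uniqueness and maximality built into the flow of $X_2$, whenever $(t,y)$ lies in the flow domain of $X_1$ the point $(t, f(y))$ lies in the flow domain of $X_2$, and $f(\rho^{Y_1}_t(y)) = \rho^{Y_2}_t(f(y))$. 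This disposes of the first (unconditional) assertion of the lemma and gives one half of the flow-lifting statement.

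For the second assertion, assume $f$ is proper; I need the reverse inclusion of flow domains. Suppose $(t_0, f(y))$ lies in the flow domain of $X_2$; I must show $(t_0, y)$ lies in the flow domain of $X_1$. Assume $t_0 > 0$, the case $t_0 < 0$ being symmetric. Let $[0, b)$ with $b \leq +\infty$ be the maximal forward interval on which $t \mapsto \rho^{Y_1}_t(y)$ is defined, and suppose toward a contradiction that $b \leq t_0$. The first paragraph gives $f(\rho^{Y_1}_t(y)) = \rho^{Y_2}_t(f(y))$ for $t \in [0, b)$, and the set $K := \{ \rho^{Y_2}_t(f(y)) \mid t \in [0, b] \}$ is compact (continuous image of $[0, b]$, which is contained in the flow domain of $X_2$ thanks to $b \leq t_0$). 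Properness of $f$ makes $f\inv(K)$ compact, and the curve $t \mapsto \rho^{Y_1}_t(y)$ stays inside $f\inv(K)$, so it has compact closure; Lemma~\ref{le:extend} then extends it past $b$, contradicting maximality. Hence $b > t_0$ and $(t_0, y)$ lies in the flow domain of $X_1$.

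The ``in particular'' clause is then immediate: if the flow of $X_2$ is defined on $\R_{\geq 0} \times Y_2$, the preimage of its flow domain under $\id_\R \times f$ contains $\R_{\geq 0} \times Y_1$, so by the flow-lifting conclusion the flow of $X_1$ is defined for all $t \geq 0$. I do not expect a serious obstacle; the only place properness is used is to make $f\inv(K)$ compact, and the only mildly non-routine point is recognizing the escape lemma (Lemma~\ref{le:extend}) as the mechanism that converts ``trapped in a compact set'' into ``extendable past the endpoint.''
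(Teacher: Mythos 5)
Your proof is correct and follows essentially the same route as the paper's: push integral curves of $X_1$ forward along $f$ and invoke uniqueness of integral curves for the unconditional inclusion, then use properness of $f$ to trap the $X_1$-integral curve in the compact set $f\inv(K)$ and apply the escape lemma (Lemma~\ref{le:extend}) to rule out premature blow-up. No gaps.
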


\begin{proof}
Let $\rho^{Y_i} \colon D_i \to Y_i$ be the flow of $X_i$.
Fix any $y_1^0 \in Y_1$ and $y_2^0 \in Y_2$ such that $f(y_1^0) = y_2^0$.
Let $(a_i,b_i) = \{ t \in \R \ | \ (t,y_i^0) \in D_i \}$,
and define $y_i \colon (a_i,b_i) \to Y_i$
by $y_i(t) = \rho^{Y_i}_t(y_i^0)$.
We claim that $(a_1,b_1) \subseteq (a_2,b_2)$
and that for all $t$ in this interval we have $f(y_1(t)) = y_2(t)$.

Because $f(y_1^0) = y_2^0$
and the differential of $f$ intertwines $X_1$ with $X_2$,
and because the curve $y_1$ satisfies 
\begin{equation} \labell{properties of y1}
y_1(0) = y_1^0 \ \text{ and } \ \dot y_1 = X_1 \circ y_1,
\end{equation}
the curve $y_2' := f \circ y_1 \colon (a_1,b_1) \to Y_2$
satisfies $y_2'(0) = y_2^0$ and $\dot y_2' = X_2 \circ y_2$.
By the properties of the flow of $X_2$,
it follows that $(a_1,b_1) \subseteq (a_2,b_2)$
and $y_2'(t) = y_2(t)$ for all $t \in (a_1,b_1)$.

Assume now that $f$ is proper.
It remains to show that $(a_1,b_1) = (a_2,b_2)$.
Seeking a contradiction, assume that $b_1 < b_2$; 
the case $a_1 > a_2$ is similar.
Then $[0,b_1] \subset [0,b_2)$.
Since $f$ is proper, the set $f\inv(y_2([0,b_1]))$ is compact.
It contains $y_1([0,b_1))$.
It follows from Lemma~\ref{le:extend}
that $y_1$ extends to a solution of~\eqref{properties of y1}
that is defined on $[0,b_1')$ for some $b_1' > b_1$.
This contradicts the definition of $b_1$ and properties of the flow of $X_2$.
\end{proof}

\begin{Definition} \labell{def:liouville}
Let $(M,\omega)$ be an exact symplectic manifold
and $\lambda$ a one-form such that $d\lambda = \omega$.
The corresponding \textbf{Liouville vector field}
is the vector field $X_M$ on $M$ that satisfies $\iota(X_M) \omega = \lambda$.
\end{Definition}

The following lemma will be used in the proofs of 
Lemmata \ref{meets B and extend oneform} and \ref{liouville}.

\begin{Lemma} \labell{lem:liouville}
Let $G$ be a compact Lie group,
and let $(M,\omega,\mu)$ be an exact Hamiltonian $G$ manifold.
Let $\lambda$ be a $G$ invariant one-form on $M$
such that $d\lambda = \omega$ and $\mu^\xi = \iota(\xi_M)\lambda$
for all $\xi \in \g$.
Let $X_M$ be the corresponding Liouville vector field.  Then
\begin{itemize}
\item $X_M$ is $G$ invariant.
\item $\LL_{X_M} \lambda = \lambda$. 
\item The momentum map $\mu$ intertwines $X_M$ with the Euler vector field
on $\g^*$ (see Definition \ref{def:euler}).
\end{itemize}
\end{Lemma}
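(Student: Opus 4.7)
The plan is to verify each of the three claims by a direct computation, using the defining identity $\iota(X_M)\omega=\lambda$, Cartan's magic formula, Hamilton's equation, and the hypothesis $\mu^\xi=\iota(\xi_M)\lambda$.

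For the first item, I would exploit the fact that the defining equation $\iota(X_M)\omega=\lambda$ is $G$-equivariant. Pulling it back by the action of $g\in G$, and using the $G$-invariance of both $\omega$ and $\lambda$, gives $\iota(g^{-1}_*X_M)\omega=\lambda=\iota(X_M)\omega$. Non-degeneracy of $\omega$ then forces $g_*X_M=X_M$. This is routine, so I expect no difficulty here.

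For the second item, the key observation is $\iota(X_M)\lambda=\omega(X_M,X_M)=0$, because $\lambda=\iota(X_M)\omega$. Applying Cartan's formula,
\[
\LL_{X_M}\lambda=d(\iota(X_M)\lambda)+\iota(X_M)d\lambda=0+\iota(X_M)\omega=\lambda,
\]
which is what we want.

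For the third item, by Definition~\ref{def:euler} the Euler vector field on $\g^*$ at a point $\varphi\in\g^*$ is $\varphi$ itself (under the canonical trivialization of $T\g^*$). So I must check that $d\mu_x(X_M(x))=\mu(x)$ in $\g^*$, i.e., that $\langle d\mu_x(X_M(x)),\xi\rangle=\mu^\xi(x)$ for every $\xi\in\g$. Using Hamilton's equation $d\mu^\xi=-\iota(\xi_M)\omega$ together with $\mu^\xi=\iota(\xi_M)\lambda$, the computation reads
\[
d\mu^\xi(X_M)=-\omega(\xi_M,X_M)=\omega(X_M,\xi_M)=\iota(\xi_M)\iota(X_M)\omega=\iota(\xi_M)\lambda=\mu^\xi,
\]
which settles the claim. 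The main (minor) obstacle is just keeping track of sign conventions when substituting Hamilton's equation; otherwise all three items follow by formal manipulation.
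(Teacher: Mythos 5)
Your proposal is correct and follows essentially the same route as the paper: $G$-invariance from invariance of $\omega$ and $\lambda$ plus non-degeneracy, Cartan's formula with $\iota(X_M)\iota(X_M)\omega=0$ for the second item, and the computation $\LL_{X_M}\mu^\xi=\iota(X_M)d\mu^\xi=\iota(\xi_M)\iota(X_M)\omega=\iota(\xi_M)\lambda=\mu^\xi$ for the third. All signs check out; nothing to add.
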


\begin{proof}\ 
\begin{itemize}
\item
The $G$ invariance of $X_M$ follows from the $G$ invariance of $\lambda$
and of $\omega$.
\item
We have
$$ \LL_{X_M} \lambda = \iota(X_M) d \lambda + d \iota(X_M) \lambda
 = \iota(X_M) \omega + d \iota(X_M) \iota(X_M) \omega
 = \iota(X_M) \omega + 0 = \lambda. $$

\item
For each $\xi \in \g$ we have
$$ \LL_{X_M} \mu^\xi = \iota(X_M) d \mu^\xi
 = - \iota(X_M) \iota(\xi_M) \omega
 = \iota(\xi_M) \iota(X_M) \omega
 = \iota(\xi_M) \lambda
 = \mu^\xi .$$
This implies that $\mu$ intertwines $X_M$ 
with the Euler vector field on $\g^*$.
\end{itemize}
\end{proof}

\smallskip

Fix an $\Ad^*$-invariant inner product on $\g^*$. 
The next lemma will be used in the proof of 
Lemma~\ref{meets B and extend oneform}.

\begin{Lemma} \labell{retraction}
Let $M$ be a $G$ manifold and $\mu \colon M \to \g^*$ a map
that intertwines the $G$ action on $M$ with the coadjoint $G$ action on $\g^*$.
Let $\calT$ be an open subset of $\g^*$
that is $\Ad^*$-invariant and is starshaped about the origin. 
Suppose that the image of $\mu$ is contained in $\calT$
and that $\mu$ is proper as a map to $\calT$.

Let $V$ be an open ball in $\g^*$ centred at the origin 
and contained in $\calT$,
and let $B$ be a closed ball in $\g^*$ centred at the origin
and contained in $V$.
Suppose that there exists a vector field on $M \ssminus \mu^{-1}(0)$
that lifts the Euler vector field on $\g^*$.

Then there exists a $G$ equivariant map $\varphi \colon M \to M$
whose image is contained in $\mu\inv(V)$,
whose restriction to some neighbourhood of $\mu\inv(B)$
is the identity map on that neighbourhood,
and such that $\varphi$ is $G$-equivariantly smoothly homotopic
to the identity map on $M$. 
\end{Lemma}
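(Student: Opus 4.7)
The plan is to flow backward along a $G$-invariant lift of the Euler vector field by a time that smoothly depends on the momentum value, vanishing on a neighbourhood of $B$ and negative enough outside $V$ to retract the image into $V$.

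First, I replace the given lift $\tilde X$ on $M\ssminus\mu^{-1}(0)$ by its average over $G$ against the Haar measure. Since $\mu$ is $G$-equivariant and the Euler vector field $E$ on $\g^*$ is itself coadjoint-equivariant --- $\Ad^*(g)E(\alpha)=\Ad^*(g)\alpha=E(\Ad^*(g)\alpha)$ --- this average is still a lift of $E$ and is now $G$-invariant. I then check that the flow $\tilde\Phi_s$ of $\tilde X$ is defined for all $s\le 0$: the flow of $E$ is $(s,\alpha)\mapsto e^s\alpha$, and starshapedness of $\calT$ about the origin forces $e^s\alpha\in\calT\ssminus\{0\}$ for every $\alpha\in\calT\ssminus\{0\}$ and every $s\le 0$; since the restriction of $\mu$ to $M\ssminus\mu^{-1}(0)$ is proper as a map to $\calT\ssminus\{0\}$, Lemma~\ref{flow lifts} yields the required negative-time flow of $\tilde X$.

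Next, let $r_B<r_V$ denote the radii of $B$ and $V$ and choose $\eps>0$ with $r_B+\eps<r_V$. Pick a smooth non-increasing function $\sigma\colon[0,\infty)\to(-\infty,0]$ with $\sigma\equiv 0$ on $[0,r_B+\eps]$ and with $e^{\sigma(r)}r<r_V$ for every $r\ge 0$, and set $\tau(\alpha):=\sigma(\|\alpha\|)$; this $\tau$ is smooth and $\Ad^*$-invariant. Writing $B'$ for the open ball of radius $r_B+\eps$, so that $B\subset B'\subset V$ and $\tau\equiv 0$ on $B'$, I define $\varphi\colon M\to M$ by
\[\varphi(x)\;:=\;\begin{cases}\tilde\Phi_{\tau(\mu(x))}(x)&\text{if }x\in M\ssminus\mu^{-1}(0),\\ x&\text{if }x\in\mu^{-1}(B').\end{cases}\]
The two clauses agree on the overlap $\mu^{-1}(B'\ssminus\{0\})$, where $\tau\circ\mu=0$ and $\tilde\Phi_0=\id$, so $\varphi$ is well defined and smooth. $G$-equivariance follows from the $G$-invariance of $\tilde X$ and $\tau$ and the $G$-equivariance of $\mu$; the identity $\mu(\varphi(x))=e^{\tau(\mu(x))}\mu(x)\in V$ gives $\varphi(M)\subset\mu^{-1}(V)$; and $\varphi\equiv\id$ on the open neighbourhood $\mu^{-1}(B')$ of $\mu^{-1}(B)$. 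Finally, the rescaled formula $H_s(x):=\tilde\Phi_{s\tau(\mu(x))}(x)$ for $s\in[0,1]$ (again extended by the identity on $\mu^{-1}(B')$) provides a smooth $G$-equivariant homotopy from $H_0=\id_M$ to $H_1=\varphi$.

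The main subtle point is smoothness of $\varphi$ (and of $H$) at points of $\mu^{-1}(0)$, where $\tilde X$ is not defined and the naive formula $\tilde\Phi_{\tau(\mu(x))}(x)$ does not literally make sense. This is exactly what forces the choice of $\tau$ to vanish on an entire neighbourhood of the origin in $\g^*$: then $\varphi$ reduces to the identity on the open set $\mu^{-1}(B')\supset\mu^{-1}(0)$, and the two clauses patch smoothly.
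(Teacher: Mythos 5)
Your proof is correct and follows essentially the same route as the paper's: average the lift of the Euler vector field to make it $G$-invariant, use a radial, $\Ad^*$-invariant cutoff in the momentum value so that the map is the identity on a neighbourhood of $\mu^{-1}(B)$ (which simultaneously disposes of the fact that the lift is only defined off $\mu^{-1}(0)$), and combine properness of $\mu$ with Lemma~\ref{flow lifts} to guarantee the relevant flow is defined. The only cosmetic difference is that the paper multiplies the vector field by the cutoff $h\circ\mu$ and takes the time-one map of the rescaled field, whereas you flow the unrescaled field for the momentum-dependent time $\tau(\mu(x))$; both implementations yield the required $\varphi$ and homotopy.
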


\begin{proof}
Let $\eps_1$ and $\eps_3$ be, respectively, 
the radii of the balls $B$ and $V$.
Let $\eps_2$ be strictly between $\eps_1$ and $\eps_3$,
and let $V'$ be the open ball of radius $\eps_2$ in $\g^*$ that is centred
at the origin.

There exists a vector field on $[0,\infty)$ 
whose flow $(t,x) \mapsto \rho_t(x)$ is defined for all $t \geq 0$
and satisfies $\rho_t|_{[0,\eps_{2}]} 
                = \text{Identity}|_{[0,\eps_{2}]}$
for all $t$ and $\rho_1([0,\infty)) \subset [0,\eps_3)$ when $t=1$.
The flow
$$ \rho^\calT_t(\beta) := \begin{cases}
 \rho_t(|\beta|) \cdot \frac{\beta}{|\beta|} & \beta \neq 0 \\
 0 & \beta = 0
\end{cases}$$
on $\calT$ is defined for all $t \geq 0$
and satisfies $\rho^\calT_t|_{V'} = \text{Identity}|_{V'}$ for all $t$
and $\rho_1(\calT) \subset V$ when $t=1$.

Let $X^\calT$ be the vector field on $\calT$ 
that generates the flow $\rho^\calT$.
The assumptions on $\rho^\calT$ imply that $X^\calT$ is a multiple
of the Euler vector field on $\g^*$
by a smooth $\Ad^*$-invariant function $h \colon \calT \to \R$
that vanishes on $B$.  
Take a vector field on $M$ that lifts the Euler vector field on $\g^*$;
by averaging, we may assume that it is $G$ invariant;
let $X^M$ be its multiple by the function $h \circ \mu$.
Then $X^M$ is a $G$-invariant vector field
on $M$ that vanishes on $\mu\inv(B)$ and the map $\mu$
intertwines $X^M$ with $X^\calT$.
By Lemma \ref{flow lifts}, 
the flow $\rho^M_t$ of the vector field $X^M$ is defined for all $t \geq 0$
and lifts the flow $\rho^\calT_t$.
The time one map $\varphi := \rho^M_1$ has the required properties.
\end{proof}

The next lemma will be used in the proof of Proposition \ref{prop:local global}.

\begin{Lemma} \labell{meets B and extend oneform}
Let $(M,\omega,\mu)$ be an exact Hamiltonian $G$ manifold.
Let $\calT$ be an open subset of $\g^*$
that is $\Ad^*$-invariant and is starshaped about the origin. 
Suppose that the image of $\mu$ is contained in $\calT$
and that $\mu$ is proper as a map to $\calT$.
Let $B$ be a closed ball in $\g^*$ centred at the origin
and contained in $\calT$.  Then 
\begin{enumerate}
\item Every connected component of $M$ meets $\mu^{-1}(B)$.
\item Let $V$ be an open ball in $\calT$ 
that is centred at the origin and that contains $B$.
Let $\hlambda$ be a $G$ invariant one-form on $\mu\inv(V)$
such that $d\hlambda = \omega$.
Then there exists a $G$ invariant one-form $\lambda$ on $M$
such that $d\lambda = \omega$ and such that
$\lambda = \hlambda$ on $\mu\inv(B)$.
\end{enumerate}
\end{Lemma}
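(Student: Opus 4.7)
For part (1), the plan is to use the negative-time Liouville flow. Since $(M,\omega,\mu)$ is exact, Section~\ref{sec:exact} provides a $G$-invariant primitive $\lambda_0$ of $\omega$ inducing the given momentum map $\mu$; let $X_M$ be its Liouville vector field. By Lemma~\ref{lem:liouville}, $\mu$ intertwines $X_M$ with the Euler vector field on $\g^*$, whose flow is $\beta\mapsto e^t\beta$. For $t\le 0$ this flow maps $\calT$ into itself because $\calT$ is starshaped about the origin. Properness of $\mu$ as a map to $\calT$ and Lemma~\ref{flow lifts} then imply that the Liouville flow $\psi_t$ on $M$ is defined for all $t\le 0$ and satisfies $\mu(\psi_t(x))=e^t\mu(x)$. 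For any $x\in M$ choose $t$ sufficiently negative that $e^t\mu(x)\in B$; then $\psi_t(x)\in\mu^{-1}(B)$, and the curve $s\mapsto\psi_s(x)$ on $[t,0]$ exhibits $\psi_t(x)$ as belonging to the connected component of $x$.

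For part (2), the plan is a retraction followed by a Moser-type correction. Apply Lemma~\ref{retraction}, whose hypothesis is supplied by $X_M$, to obtain a $G$-equivariant map $\varphi=\rho^M_1\colon M\to M$; here, following the proof of that lemma, $\rho^M_s$ is the flow of a $G$-invariant vector field of the form $X^M:=(h\circ\mu)\,X_M$, where $h$ is an $\Ad^*$-invariant cutoff on $\calT$ that vanishes on $B$. Consequently $\varphi(M)\subset\mu^{-1}(V)$, and both $X^M$ and every time-$s$ map $\rho^M_s$ with $s\in[0,1]$ restrict to the identity on a neighbourhood of $\mu^{-1}(B)$. In particular, $\varphi^*\hlambda$ is a well-defined $G$-invariant one-form on $M$.

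Now define the $G$-invariant one-form
\[
 \beta \ := \ \int_0^1 (\rho^M_s)^*\,\iota(X^M)\,\omega\,ds
\]
on $M$. Cartan's formula and closedness of $\omega$ give $\tfrac{d}{ds}(\rho^M_s)^*\omega = d\bigl((\rho^M_s)^*\iota(X^M)\omega\bigr)$, so $d\beta=\varphi^*\omega-\omega$. Set $\lambda:=\varphi^*\hlambda-\beta$. Then $d\lambda=\varphi^*\omega-d\beta=\omega$ and $\lambda$ is $G$-invariant (because $\varphi$ is $G$-equivariant and $X^M$, $\omega$, $\hlambda$ are $G$-invariant). On the neighbourhood of $\mu^{-1}(B)$ where $\rho^M_s=\id$ for every $s\in[0,1]$, we have $\varphi^*\hlambda=\hlambda$ and $\beta=0$, so $\lambda=\hlambda$ there, as required. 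The main point to be careful about is to use the specific \emph{construction} of $\varphi$ from the proof of Lemma~\ref{retraction}, not merely its statement: it is the vanishing of the generating vector field $X^M$ (and hence of the whole homotopy $\rho^M_s$) on $\mu^{-1}(B)$ that forces both the Moser correction $\beta$ and the discrepancy $\varphi^*\hlambda-\hlambda$ to vanish near $\mu^{-1}(B)$.
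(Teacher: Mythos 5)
Your argument is correct, and its overall architecture matches the paper's: everything rests on the fact that the Liouville vector field of an invariant primitive inducing $\mu$ lifts the Euler vector field on $\g^*$ (Lemma~\ref{lem:liouville}), combined with properness of $\mu\colon M\to\calT$ and starshapedness of $\calT$. The differences are in execution. For part (1) the paper simply invokes Lemma~\ref{retraction} (applied with a slightly shrunken pair of balls, so that the image of $\varphi$ lands in $\mu^{-1}(B)$, and using that $\varphi$ is homotopic to the identity), whereas you run the genuine negative-time Liouville flow and use $\mu\circ\psi_t=e^t\mu$ directly; this is a touch more hands-on but equally valid, and your appeal to Lemma~\ref{flow lifts} for completeness in negative time is legitimate since that lemma identifies the full flow domains, not just $t\ge 0$. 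For part (2) the paper avoids your Moser-type integral: it picks any global $G$-invariant primitive $\lambda'$ of $\omega$, notes that $\hlambda-\lambda'$ is closed on $\mu^{-1}(V)$, and sets $\lambda:=\lambda'+\varphi^*(\hlambda-\lambda')$, so that $d\lambda=\omega$ follows from $d(\varphi^*(\hlambda-\lambda'))=\varphi^*d(\hlambda-\lambda')=0$ and the boundary condition from $\varphi=\id$ near $\mu^{-1}(B)$. Your correction term $\beta=\int_0^1(\rho^M_s)^*\iota(X^M)\omega\,ds$ achieves the same thing without choosing $\lambda'$, at the cost of the homotopy formula; you are right that one must use the construction of $\varphi$ (vanishing of $X^M$ on a neighbourhood of $\mu^{-1}(B)$, hence $\rho^M_s=\id$ there for all $s$) rather than only the statement of Lemma~\ref{retraction}, and the paper's proof implicitly relies on the same strengthening when it asserts $\varphi|_{\mu\inv(B)}=\id$ forces $\lambda=\hlambda$ on $\mu^{-1}(B)$. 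Both routes are sound; the paper's is marginally shorter, yours is self-contained in not requiring a preliminary global primitive for part (2).
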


\begin{proof}
By Lemma~\ref{lem:liouville}, the Liouville vector field on $M$
lifts the Euler vector field on $\g^*$.
Part (1) then follows from Lemma~\ref{retraction}.

Let $\lambda'$ be a $G$ invariant one-form on $M$
such that $d\lambda' = \omega$; such a one-form exists
because $\omega$ is exact.
The difference $\hlambda - \lambda'$ 
is a closed one-form, defined on $\mu\inv(V)$.
By Lemma~\ref{lem:liouville},
the Liouville vector field on $M$ lifts the Euler vector field on $\g^*$.
By Lemma~\ref{retraction},
there exists a $G$-equivariant smooth map $\varphi \colon M \to M$ 
whose image is contained in $\mu\inv(V)$
and such that $\varphi|_{\mu\inv(B)} = \text{identity}_{\mu\inv(B)}$.
Then
$$\lambda := \lambda' + \varphi^* ( \hlambda - \lambda') $$
is defined on all of $M$.
Because $d\lambda' = \omega$ and $d(\hlambda-\lambda')= 0$,
we have $d\lambda = \omega$.
Because $\varphi|_{\mu\inv(B)} = \text{Identity}_{\mu\inv(B)}$,
we have $\lambda = \hlambda$ on $\mu\inv(B)$.
This completes the proof of Part~(2).
\end{proof}

\section{Proof of the main result}
\labell{sec:liouville}

In this section we finally prove Theorem \ref{main}. 
The proof is based on Theorem \ref{main to starshaped} below.

Recall that an isomorphism of Hamiltonian $G$ manifolds 
is an equivariant symplectomorphism that intertwines the momentum maps. 
We call two Hamiltonian $G$ manifolds \textbf{isomorphic} 
if there is an isomorphism between them.
The \textbf{restriction} 
of a Hamiltonian $G$ manifold $(M,\omega,\mu)$
to an $\Ad^*$-invariant open subset $\calT \subset \g^*$ is
$\big( \mu^{-1}(\calT), \omega|_{\mu\inv(\calT)}, \mu|_{\mu\inv(\calT)} \big)$. 

The following lemma is used in the proof 
of Theorem \ref{main to starshaped}.

\begin{Lemma}[Local isomorphism] \labell{le:local}
Let $G$ be a compact Lie group
and let $(M,\omega,\mu)$ be an exact Hamiltonian $G$ manifold.
Suppose that $M/G$ is connected.
Let $\calT \subset \g^*$ be an $\Ad^*$-invariant open subset
that is starshaped about the origin.
Suppose that the image of $\mu$ is contained in $\calT$
and the map $\mu \colon M \to \calT$ is proper.
Then there exists an $\Ad^*$-invariant open neighbourhood $W$ 
of the origin $0 \in \g^*$
and a finite disjoint union 
of centred Hamiltonian $G$ models, $\bigsqcup(Y_j,\omega_{Y_j},\mu_{Y_j})$, 
whose restriction to $W$ is isomorphic to the restriction 
of $(M,\omega,\mu)$ to $W$.
\end{Lemma}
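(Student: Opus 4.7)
The plan is to decompose $\mu^{-1}(0)$ into finitely many $G$-orbits, invoke the local normal form near each, and then shrink a neighbourhood $W$ of $0 \in \g^*$ until the local pictures cover $\mu^{-1}(W)$ on both sides.

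Properness of $\mu \colon M \to \calT$ applied to the compact set $\{0\}$ shows $Z_0 := \mu^{-1}(0)$ is compact, hence so is $Z_0/G$. By Proposition \ref{sjamaar}, every connected component of $Z_0/G$ consists of a single isolated point, so compactness forces them to be finite in number. Thus $Z_0 = G\cdot x_1 \sqcup \cdots \sqcup G\cdot x_N$ is a finite disjoint union of orbits. By Theorem \ref{lnf isotropic}, for each $j$ there exist a centred Hamiltonian $G$-model $(Y_j,\omega_{Y_j},\mu_{Y_j})$, a $G$-invariant open neighbourhood $U_j$ of $G\cdot x_j$ in $M$, and a $G$-equivariant symplectomorphism $F_j \colon U_j \to V_j$ onto a $G$-invariant open neighbourhood $V_j$ of the central orbit in $Y_j$, with $F_j(G \cdot x_j)$ equal to the central orbit. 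Shrinking the $U_j$, we arrange them to be pairwise disjoint. On the connected component of $U_j$ containing $x_j$, both $\mu$ and $\mu_{Y_j}\circ F_j$ are $G$-equivariant momentum maps that vanish at $x_j$, hence they differ by an element of $(\g^*)^G$ that must be zero; by $G$-equivariance this agreement propagates to the $G$-saturation of that component, and replacing $U_j$ by the saturation makes $F_j$ an isomorphism of Hamiltonian $G$-manifolds.

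By construction $V_j \cap \mu_{Y_j}^{-1}(0) = F_j(U_j \cap Z_0) = F_j(G \cdot x_j)$ is exactly the central orbit of $Y_j$, so Lemma \ref{muY is proper} yields that $\mu_{Y_j}^{-1}(0)$ is the central orbit and that $\mu_{Y_j} \colon Y_j \to \g^*$ is proper. Pick an $\Ad^*$-invariant open ball $W \subset \calT$ centred at $0$ so small that
\begin{itemize}
\item[(a)] $\mu^{-1}(W) \subset \bigsqcup_j U_j$, and
\item[(b)] $\mu_{Y_j}^{-1}(W) \subset V_j$ for each $j$.
\end{itemize}
If (a) failed for every $W$, properness of $\mu$ would produce a point of $Z_0$ outside the open set $\bigsqcup_j U_j$, contradicting $Z_0 \subset \bigsqcup_j U_j$; (b) is analogous, using properness of $\mu_{Y_j}$ together with the inclusion $\mu_{Y_j}^{-1}(0) \subset V_j$. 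Then $\bigsqcup_j F_j$ restricts to a $G$-equivariant symplectomorphism intertwining the momentum maps from $\mu^{-1}(W) = \bigsqcup_j \bigl(U_j \cap \mu^{-1}(W)\bigr)$ onto $\bigsqcup_j \mu_{Y_j}^{-1}(W)$, which is the restriction of $\bigsqcup_j Y_j$ to $W$.

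The main technical hurdle is the two-sided control of $W$: both $\mu^{-1}(W)$ and each $\mu_{Y_j}^{-1}(W)$ must lie inside the matched-up local charts. Lemma \ref{muY is proper} is the crucial input here, since it promotes the local information at the central orbit to global properness of $\mu_{Y_j}$, without which condition (b) could not be arranged from a single $W$.
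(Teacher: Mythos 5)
Your proposal is correct and follows essentially the same route as the paper's own proof: compactness of $\mu^{-1}(0)$ plus Proposition~\ref{sjamaar} to get finitely many orbits, Theorem~\ref{lnf isotropic} for the local models, Lemma~\ref{muY is proper} to upgrade to global properness of each $\mu_{Y_j}$, and a properness argument to shrink $W$ so that both preimages land in the matched charts. The only additions are details the paper leaves implicit (making the $U_j$ disjoint, checking that the local symplectomorphisms intertwine the momentum maps), and these are handled correctly.
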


\begin{proof}
Since $\mu \colon M\to \calT$ is proper, the level set $\mu^{-1}(0)$ 
is compact. 
By Proposition \ref{sjamaar}, $\mu^{-1}(0)$ is a finite union 
of $G$ orbits, $\mu^{-1}(0) = \bigsqcup_{j=1}^N G \cdot x_j$.

Every orbit in the zero level set of the momentum map is isotropic.  
(More generally, in any Hamiltonian $G$ manifold $(M',\omega',\mu')$,
for any orbit $\calO \subset M'$, its image is 
a coadjoint orbit $\mu(\calO) \subset \g^*$,
and the pullback of the Kirillov-Kostant-Souriau form
under the momentum map $\mu'|_\calO \colon \calO \to \mu'(\calO)$
coincides with the pullback of the ambient symplectic form $\omega'$
by the inclusion map $\calO \hookrightarrow M'$.
See Kazhdan-Kostant-Sternberg \cite{kks}.)

The local normal form theorem for isotropic orbits (see
Theorem~\ref{lnf isotropic}) implies that, for each~$j$,
there exist a centred Hamiltonian $G$ model $(Y_j,\omega_{Y_j},\mu_{Y_j})$, 
a $G$-invariant neighbourhood $U'_j$ of the central orbit in $Y_j$,
and a $G$-invariant neighbourhood $U_j$ of $G \cdot x_j$ in $M$,
such that $\big( U_j,\omega|_{U_j},\mu|_{U_j} \big)$
is isomorphic to $\big( U'_j, {\omega_{Y_j}}|_{U'_j},{\mu_{Y_j}}|_{U'_j} \big)$.

In particular, each $U'_j \cap \mu_{Y_j}\inv(0)$ consists of a single orbit.
By Lemma \ref{muY is proper}, for each $j$,
the level set $\mu_{Y_j}\inv(0)$ consists of only the central orbit,
and the momentum map $\mu_{Y_j} \colon Y_j \to \g^*$ is proper.
Because $\bigsqcup_j \mu_{Y_j} \colon \bigsqcup_j Y_j \to \g^*$ 
and $\mu \colon M \to \g^*$ are proper,
there exists an $\Ad^*$-invariant neighbourhood $W$ of $0$ in $\g^*$ 
whose preimage under $\bigsqcup_j \mu_{Y_j}$ is contained 
in the neighbourhood $\bigsqcup_j U'_j$ of $\bigsqcup_j \mu_{Y_j}^{-1}(0)$
and whose preimage under $\mu$ is contained 
in the neighbourhood $U$ of $\mu^{-1}(0)$.
The restrictions 
of $(M,\omega,\mu)$ and $\bigsqcup_j (Y_j,\omega_{Y_j},\mu_{Y_j})$ to $W$
are isomorphic. This proves Lemma \ref{le:local}.
\end{proof}

The following lemma will be used in the proof 
of Proposition \ref{prop:local global}.

\begin{Lemma} \labell{liouville}
Let $G$ be a compact connected Lie group,
and let $(M,\omega,\mu)$ an exact Hamiltonian $G$-manifold. 
Let $\calT \subset \g^*$ be an $\Ad^*$-invariant open subset 
that is starshaped about the origin. 
Assume that the image of $\mu$ is contained in $\calT$, 
and that the map $\mu \colon M \to \calT$ is proper.
Let $\lambda$ be a $G$ invariant one-form on $M$
such that $d\lambda = \omega$ and $\mu^\xi = \iota(\xi_M)\lambda$
for all $\xi \in \g$.
Let $X$ be the vector field on $M$ that satisfies
$$ \iota(X) \omega = - \lambda ; $$
this is the negative of the Liouville vector field corresponding to $\lambda$.
Let $\Psi_t$ be the flow of the vector field $X$.  Then 
\begin{itemize}
\item
The flow $\Psi_t$ is defined for all $t \geq 0$.
\item
The flow $\Psi_t$ is $G$ equivariant.
\item
We have $\Psi_t^* \lambda = e^{-t} \lambda$ for all $t \geq 0$.
\item
For each neighbourhood $B'$ of the origin $0$ in $\g^*$,
and for each $t \geq 0$, 
the flow $\Psi_t$ restricts to a diffeomorphism
$$ \Psi_t \colon \mu\inv(e^t B' \cap \calT) \to \mu\inv(B' \cap e^{-t} \calT) $$
with inverse
$$ \Psi_{-t} \colon 
       \mu\inv(B' \cap e^{-t} \calT) \to \mu\inv(e^t B' \cap \calT) .$$
\end{itemize}
\end{Lemma}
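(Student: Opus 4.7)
The plan is to extract most of the bullets from the Cartan calculus and from Lemma~\ref{lem:liouville} (applied to $-X$, which is the Liouville vector field associated with $\lambda$), and then to use the starshaped hypothesis on $\calT$ together with properness of $\mu$ and Lemma~\ref{flow lifts} to control the flow domains. Throughout, the key observation is that $X = -X_M$ in the notation of Lemma~\ref{lem:liouville}, so $\mu$ intertwines $X$ with the \emph{negative} of the Euler vector field on $\g^*$, whose flow is $(t,\beta) \mapsto e^{-t}\beta$.

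First I would handle the two algebraic bullets. Since $\omega$ is nondegenerate and both $\omega$ and $\lambda$ are $G$ invariant, the equation $\iota(X)\omega=-\lambda$ forces $X$ to be $G$ invariant, hence its flow $\Psi_t$ is $G$ equivariant. By antisymmetry, $\iota(X)\lambda=-\iota(X)\iota(X)\omega=0$, so Cartan's formula gives
\[
  \LL_X\lambda = d\iota(X)\lambda + \iota(X)d\lambda = \iota(X)\omega = -\lambda,
\]
and integrating $\tfrac{d}{dt}\Psi_t^*\lambda = \Psi_t^*\LL_X\lambda = -\Psi_t^*\lambda$ from $0$ yields $\Psi_t^*\lambda=e^{-t}\lambda$.

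Next, for completeness of $\Psi_t$ for $t\ge 0$: because $\calT$ is starshaped about the origin, the downstairs flow $(t,\beta)\mapsto e^{-t}\beta$ on $\calT$ is defined for every $t\ge 0$. Since $\mu\colon M\to\calT$ is proper and intertwines $X$ with this negative Euler vector field, Lemma~\ref{flow lifts} lifts this to the conclusion that $\Psi_t$ is defined for all $t\ge 0$. The intertwining property also yields $\mu(\Psi_t(x))=e^{-t}\mu(x)$, so $\Psi_t$ maps $\mu^{-1}(e^tB'\cap\calT)$ into $\mu^{-1}(B'\cap e^{-t}\calT)$.

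For the diffeomorphism bullet I would apply the same reasoning to the positive Liouville field $-X$, which via $\mu$ lifts the (positive) Euler vector field on $\g^*$, with flow $(s,\beta)\mapsto e^s\beta$. For $y$ with $\mu(y)\in B'\cap e^{-t}\calT$, the point $e^t\mu(y)$ lies in $\calT$, and starshapedness of $\calT$ forces the entire segment $\{e^s\mu(y) : s\in[0,t]\}$ to lie in $\calT$; hence the positive Euler flow at $\mu(y)$ is defined up to time $t$. Applying Lemma~\ref{flow lifts} again to the proper map $\mu$, the flow of $-X$ at $y$ is defined up to time $t$, producing $\Psi_{-t}(y)\in \mu^{-1}(e^tB'\cap\calT)$. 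The basic flow identities $\Psi_t\circ\Psi_{-t}=\id$ and $\Psi_{-t}\circ\Psi_t=\id$ on the relevant open sets then exhibit the claimed mutually inverse smooth maps. The main obstacle is bookkeeping for the non-complete backward flow: one must convert the pointwise endpoint condition $e^t\mu(y)\in\calT$ into the interval condition $e^s\mu(y)\in\calT$ for all $s\in[0,t]$, and starshapedness of $\calT$ about the origin is exactly what performs this conversion, enabling the second application of Lemma~\ref{flow lifts}.
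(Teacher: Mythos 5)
Your proposal is correct and follows essentially the same route as the paper: the algebraic bullets come from the Cartan-calculus identities of Lemma~\ref{lem:liouville} (with the sign flip for $X=-X_M$), and both completeness for $t\ge 0$ and the last bullet come from Lemma~\ref{flow lifts} applied to the proper map $\mu$ together with starshapedness of $\calT$. Your spelled-out treatment of the backward flow (converting the endpoint condition $e^t\mu(y)\in\calT$ into the interval condition via starshapedness before invoking Lemma~\ref{flow lifts}) is exactly the detail the paper leaves implicit in its one-line justification of the final claim.
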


\begin{proof}
By Lemma \ref{lem:liouville},
$\mu$ intertwines $X$ with the negative of the Euler vector field on $\g^*$.
Because $\calT$ is starshaped about the origin,
the flow of the restriction to $\calT$ of the Euler vector field 
is defined for all $t \geq 0$.
Because $\mu \colon M \to \calT$ is proper and by Lemma \ref{flow lifts}, 
we deduce that the flow of $X$ is also defined for all $t \geq 0$.

By Lemma \ref{lem:liouville}, $X$ is $G$ invariant.
This implies that $\Psi_t$ is $G$ equivariant.

By Lemma \ref{lem:liouville}, $\LL_X \lambda = - \lambda $.
This implies that $\Psi_t^*\lambda = e^{-t}\lambda$.

The last claim follows from Lemma \ref{flow lifts}
and the fact that multiplication by $e^{-t}$
restricts to a diffeomorphism from $e^t B' \cap \calT$ 
to $B' \cap e^{-t} \calT$.
\end{proof}

The following proposition will be used 
in the proof of Theorem \ref{main to starshaped}.

\begin{Proposition}[Local to global] \labell{prop:local global} 
Let $G$ be a compact Lie group,
and let $(M,\omega,\mu)$ and $(M',\omega',\mu')$ be 
exact Hamiltonian $G$-manifolds. 
Let $\calT \subset \g^*$ be an $\Ad^*$-invariant
open subset 
that is starshaped about the origin. 
Assume that the images of $\mu$ and $\mu'$ are contained in $\calT$, 
and that the maps $\mu \colon M \to \calT$ and $\mu' \colon M' \to \calT$ 
are proper.
Finally, assume that there exists
an $\Ad^*$-invariant neighbourhood $W$ of $0$ 
such that the restrictions of $(M,\omega,\mu)$ and $(M',\omega',\mu')$ 
to $W$ are isomorphic. 
Then $(M,\omega,\mu)$ and $(M',\omega',\mu')$ are isomorphic.
\end{Proposition}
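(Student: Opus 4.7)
The plan is to extend the local isomorphism globally by using the negative Liouville flows on $M$ and on $M'$ to ``shrink'' any point down into the region $\mu^{-1}(B)$ where the two manifolds are already identified, and then to use the inverse flow on $M'$ to transport the image back. For this to be well-defined and smooth, the key preparatory step is to choose invariant primitives $\lambda$ on $M$ and $\lambda'$ on $M'$ that are intertwined by the given local isomorphism $F$, so that the Liouville vector fields on the two sides are also intertwined by $F$ wherever they are both defined.

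First I would fix any $G$-invariant primitive $\lambda'$ of $\omega'$ on $M'$ with $\mu'^\xi = \iota(\xi_{M'})\lambda'$ (it exists by exactness of $(M',\omega',\mu')$), and pull it back by $F$ to obtain a $G$-invariant one-form $\hat\lambda := F^*\lambda'$ on $\mu^{-1}(W)$ satisfying $d\hat\lambda=\omega$ and $\mu^\xi = \iota(\xi_M)\hat\lambda$ (the latter because $F$ is equivariant and intertwines the momentum maps). Choose a closed ball $B\subset\g^*$ centered at the origin with $B\subset W$ and an open ball $V$ with $B\subset V\subset W$. Apply Lemma~\ref{meets B and extend oneform}(2) to extend $\hat\lambda$ to a $G$-invariant primitive $\lambda$ of $\omega$ on all of $M$ with $\lambda=\hat\lambda$ on $\mu^{-1}(B)$. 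To verify the momentum-map identity $\mu^\xi=\iota(\xi_M)\lambda$ globally, note that $f^\xi := \iota(\xi_M)\lambda - \mu^\xi$ is closed (a standard Cartan-formula calculation using $\LL_{\xi_M}\lambda=0$ and Hamilton's equation) and vanishes on $\mu^{-1}(B)$; by Lemma~\ref{meets B and extend oneform}(1), every connected component of $M$ meets $\mu^{-1}(B)$, so $f^\xi\equiv 0$. Thus Lemma~\ref{liouville} applies to both $(M,\omega,\mu,\lambda)$ and $(M',\omega',\mu',\lambda')$, producing the negative Liouville flows $\Psi_t$ and $\Psi'_t$ defined for all $t\geq 0$, with $\mu\circ\Psi_t = e^{-t}\mu$ and $\mu'\circ\Psi'_t = e^{-t}\mu'$.

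Next I would define $\tilde F\colon M\to M'$ as follows. Since $B$ is a ball about $0$, for every $x\in M$ there exists $t\geq 0$ large enough that $\Psi_t(x)\in\mu^{-1}(B)$; set
\[
   \tilde F(x) := \Psi'_{-t}\bigl(F(\Psi_t(x))\bigr).
\]
Here $\Psi'_{-t}$ makes sense because $\mu'(F(\Psi_t(x))) = e^{-t}\mu(x)$ lies in $e^{-t}\calT$ (cf.\ the last item of Lemma~\ref{liouville}). Because $F$ pulls back $\lambda'$ to $\lambda$ on $\mu^{-1}(B)$, the local isomorphism $F$ intertwines the restrictions of the Liouville vector fields to that region; consequently, if $t\leq s$ are both large enough, then $\Psi_r(x)\in\mu^{-1}(B)$ for all $r\in[t,s]$ (since $|\mu|$ is monotonically decreasing along $\Psi$), and the identity $F\circ\Psi_{s-t}=\Psi'_{s-t}\circ F$ on that region gives $\Psi'_{-s}(F(\Psi_s(x)))=\Psi'_{-t}(F(\Psi_t(x)))$. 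Hence $\tilde F$ is well-defined and, being locally given by a composition of diffeomorphisms, is smooth. The analogous construction with $F^{-1}$ in place of $F$ produces a two-sided inverse, so $\tilde F$ is a diffeomorphism.

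It remains to check that $\tilde F$ is $G$-equivariant, symplectic, and intertwines the momentum maps. Equivariance is immediate from equivariance of $F$, $\Psi_t$, $\Psi'_t$. The momentum-map compatibility is
\(
\mu'\circ\tilde F = e^{t}\,\mu'\circ F\circ\Psi_t = e^{t}\,\mu\circ\Psi_t = \mu.
\)
For the symplectic form, Cartan's formula yields $\Psi_t^*\omega=e^{-t}\omega$ and $(\Psi'_{-t})^*\omega'=e^{t}\omega'$, and on $\mu^{-1}(B)$ we have $F^*\omega'=\omega$, so
\(
\tilde F^*\omega' = \Psi_t^*\,F^*\,(\Psi'_{-t})^*\omega' = e^{t}\,\Psi_t^*(F^*\omega') = e^{t}\,\Psi_t^*\omega = \omega.
\)
The main obstacle throughout is not any one of these verifications in isolation but the need to arrange the primitives $\lambda$ and $\lambda'$ to be genuinely compatible under $F$; this is what allows the local isomorphism to be transported by the two Liouville flows consistently and is the content of Step~1.
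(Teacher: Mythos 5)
Your proposal is correct and follows essentially the same route as the paper's proof: pull back $\lambda'$ by the local isomorphism, extend it to a global invariant primitive agreeing on $\mu^{-1}(B)$ via Lemma~\ref{meets B and extend oneform}, and conjugate the local isomorphism by the two negative Liouville flows, checking consistency over different times $t$. The only difference is presentational --- the paper packages your pointwise formula $\Psi'_{-t}\circ F\circ\Psi_t$ as a family of maps $\tF_t$ on the nested open sets $\mu^{-1}(e^tB'\cap\calT)$ and glues them, which is the same argument.
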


\begin{proof}

By assumption, there exist a neighbourhood $W$ of the origin in $\g^*$
and an isomorphism $F_0 \colon \mu\inv(W) \to {\mu'}\inv(W)$
between the restrictions of $(M,\omega,\mu)$ and $(M',\omega',\mu')$ to $W$.

Fix an $\Ad^*$-invariant inner product on $\g^*$.
Without loss of generality,
assume that the neighbourhood $W$ 
is a ball in $\g^*$ that is centred at the origin.

Choose a $G$-invariant primitive $\lambda'$ of $\omega'$ 
that gives rise to the momentum map $\mu'$:
\begin{equation} \labell{properties of lambda prime}
 d\lambda' = \omega' \quad \text{ and } \quad 
 \iota(\xi_{M'}) \lambda' = {\mu'}^\xi \ \text{ for all } \xi \in \g . 
\end{equation}

The pullback
\begin{equation} \labell{eq:lam 0}
 \hlambda := F_0^* \lambda'
\end{equation}
is a $G$-invariant one-form on $\mu^{-1} (W)$, satisfying 
\begin{equation} \labell{properties of hlambda}
   d\hlambda = \omega|_{\mu^{-1}(W)} \ , \ \text{ and } \ 
    \iota(\xi_M) \hlambda = \mu^\xi|_{\mu^{-1}(W)} 
    \ \text{ for all } \xi \in \g .
\end{equation}

Let $B$ be a closed ball that is centred at the origin and is contained in $W$.
By Part~(2) of Lemma \ref{meets B and extend oneform}, 
we may find a $G$-invariant one-form $\lambda$ on $M$
such that $d\lambda = \omega$ and such that $\lambda$ and $\hlambda$ 
coincide on $\mu\inv(B)$.
By this and~\eqref{properties of hlambda}, 
$$ \iota(\xi_M) \lambda|_{\mu^{-1}(B)}  = \mu^\xi|_{\mu^{-1}(B)} 
   \ \text{ for all } \xi \in \g. $$
By Part (1) of Lemma~\ref{meets B and extend oneform}
we deduce that the momentum map that is obtained from $\lambda$ 
coincides with $\mu$.
So 
\begin{equation} \labell{properties of lambda}
   d\lambda = \omega    \quad \text{ and } \quad
   \iota(\xi_M) \lambda = \mu^\xi \ \text{ for all } \xi \in \g .
\end{equation}

We define $X$ to be the unique vector field on $M$ satisfying
$ \iota(X) \omega = - \lambda $ 
and take $\Psi_t$ to be its flow.
Similarly,
we define $X'$ to be the unique vector field on $M'$ satisfying
$ \iota(X') \omega' = - \lambda' $ 
and take $\Psi'_t$ to be its flow.
By Lemma \ref{liouville}, these flows are defined for all $t \geq 0$.

Let $B'$ be the interior of $B$.
The restriction of $F_0$ to $\mu\inv(B')$
is a $G$ equivariant diffeomorphism
$$ F_B \colon \mu\inv(B') \to {\mu'}\inv(B')$$
that satisfies $F_B^*\lambda' = \lambda$;
this implies that $F_B$ intertwines $X$ with $X'$
and hence $\Psi_t$ with $\Psi'_t$.

For every $t \geq 0$,
define $\tF_t$ 
by the requirement that the following diagram commute.
$$ \begin{CD}
\mu\inv(e^t B' \cap \calT) @> \tilde{F}_t >> {\mu'}\inv(e^t B' \cap \calT) \\
@V \Psi_t VV @VV \Psi'_t V  \\
\mu\inv( B' \cap e^{-t} \calT ) @> F_B >> {\mu'}\inv( B' \cap e^{-t}\calT) .
\end{CD} $$
By Lemma \ref{liouville},
the vertical arrows in this diagram
are $G$ equivariant diffeomorphisms, and they satisfy
$\Psi_t^* \lambda = e^{-t} \lambda$
and
${\Psi'_t}^* \lambda' = e^{-t} \lambda'$.
The bottom arrow is also a $G$ equivariant diffeomorphism,
and it satisfies $F_B^* \lambda' = \lambda$.
These facts imply that $\tF_t$ is an equivariant diffeomorphism
and that $\tF_t^* \lambda' = \lambda$.
By \eqref{properties of lambda}
and \eqref{properties of lambda prime},
$\tF_t$ is an isomorphism of Hamiltonian $G$ manifolds.

For all $t \geq 0$ and $s \geq 0$ we have
\begin{align*}
   \Psi'_{t+s} \circ \tF_t
   & = \Psi'_s \circ \Psi'_t \circ \tF_t 
                     \qquad \text{because $\Psi'$ is a flow} \\
   & = \Psi'_s \circ F_B \circ \Psi_t 
                     \qquad \text{from the definition of $\tF_t$} \\
   & = F_B \circ \Psi_s \circ \Psi_t
                     \qquad \text{because $F_B$ intertwines the flows} \\
   & = F_B \circ \Psi_{t+s} 
                     \qquad \text{because $\Psi$ is a flow}.
\end{align*}
By the definition of $\tF_{t+s}$,
this implies that $\tF_{t+s}|_{ e^t B' \cap \calT} = \tF_t $.

So the union over $t \geq 0$ of the maps $\tF_t$
gives a map $\tF \colon M \to M'$.
Because $\tF_t$ is a isomorphism for each $t$,
we have that $\tF$ is also an isomorphism.

\end{proof}

\begin{Theorem} [Isomorphism] \labell{main to starshaped} 
Let $G$ be a compact Lie group
and let $(M, \omega, \mu)$ be an exact Hamiltonian $G$ manifold.
Suppose that $M/G$ is connected.
Let $\calT \subset \g^*$ be an $\Ad^*$-invariant open subset 
that is starshaped about the origin.
Suppose that the image of $\mu$ is contained in $\calT$ and the map
$\mu \colon M \to \calT$ is proper. 
Then there exists a centred Hamiltonian $G$ model $(Y, \omega_Y, \mu_Y)$ 
whose restriction to $\calT$ is isomorphic to $(M,\omega,\mu)$.
\end{Theorem}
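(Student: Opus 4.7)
The plan is to package together the Local Isomorphism Lemma (Lemma \ref{le:local}), which produces a candidate model over a neighbourhood of $0\in\g^*$, with the Local-to-Global Proposition (Proposition \ref{prop:local global}), which upgrades any such local isomorphism to a global one. The only residual tasks are to trim the candidate from a finite disjoint union down to a single centred model, and to verify the hypotheses of Proposition \ref{prop:local global} on the restriction side.

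First, I would apply Lemma \ref{le:local} to obtain an $\Ad^*$-invariant neighbourhood $W\subset\calT$ of $0$ and a finite disjoint union $\bigsqcup_{j=1}^N (Y_j,\omega_{Y_j},\mu_{Y_j})$ of centred Hamiltonian $G$-models whose restriction to $W$ is isomorphic to the restriction of $(M,\omega,\mu)$ to $W$. I would then argue $N=1$. On one hand $\mu^{-1}(0)$ is non-empty: pick any $p\in M$; the negative Liouville flow of Lemma \ref{liouville} sends $\mu(p)$ along the segment from $\mu(p)$ to $0$ inside the starshaped set $\calT$, so the trajectory $\Psi_t(p)$ lies in $\mu^{-1}(K)$ for the compact set $K=\{e^{-t}\mu(p):t\geq 0\}\cup\{0\}\subset\calT$, which is compact in $M$ by properness, and hence has an accumulation point in $\mu^{-1}(0)$. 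On the other hand, the Remark following Proposition \ref{sjamaar} (connectedness of $\mu^{-1}(0)/G$ via Kirwan plus Proposition \ref{sjamaar} itself) forces $\mu^{-1}(0)$ to consist of at most one $G$-orbit. Since each $\mu_{Y_j}^{-1}(0)$ already contains a central orbit, matching orbits in the local isomorphism forces $N=1$. Write $Y:=Y_1$.

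Next, I would set $M':=\mu_Y^{-1}(\calT)$ equipped with the restricted data. This is an exact Hamiltonian $G$-manifold; the momentum map $\mu_Y|_{M'}\colon M'\to\calT$ is proper, since $\mu_Y\colon Y\to\g^*$ is proper by Lemma \ref{muY is proper}. Moreover $M'/G$ is connected, because the starshapedness of $\calT$ ensures that the negative Liouville flow on $Y$ preserves $M'$ and $G$-equivariantly retracts $M'$ onto the central orbit. By construction the restrictions of $(M,\omega,\mu)$ and of $(M',\omega_Y|_{M'},\mu_Y|_{M'})$ to $W$ are isomorphic, so Proposition \ref{prop:local global} applies and yields a global isomorphism $(M,\omega,\mu)\cong(M',\omega_Y|_{M'},\mu_Y|_{M'})$, which is precisely the assertion of the theorem.

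The main obstacle is the step $N=1$: this is where the global hypothesis that $M/G$ is connected must be used, and it requires combining three ingredients (nonemptiness of $\mu^{-1}(0)$ via the Liouville flow, connectedness of $\mu^{-1}(0)/G$ via Kirwan, and Proposition \ref{sjamaar}). By contrast the real analytical engine — the extension of a local equivariant symplectomorphism to a global one by conjugating with the Liouville flows on the two sides — has already been encapsulated in Proposition \ref{prop:local global}, so Theorem \ref{main to starshaped} reduces essentially to a bookkeeping exercise once the candidate model has been narrowed down to a single centred model.
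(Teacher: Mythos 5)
Your overall architecture coincides with the paper's: obtain a local isomorphism with a finite disjoint union $\bigsqcup_{j=1}^N Y_j$ of centred models via Lemma~\ref{le:local}, then globalize via Proposition~\ref{prop:local global}. The difference, and the place where your argument has a gap, is the reduction to $N=1$. You carry it out \emph{before} globalizing, by claiming that $\mu^{-1}(0)$ consists of exactly one $G$-orbit; your ``at most one orbit'' half rests on connectedness of $\mu^{-1}(0)/G$ as in the Remark following Proposition~\ref{sjamaar}. But that Remark (and the results of Kirwan and of \cite{LMTW} behind it) is stated for a momentum map that is proper as a map to $\g^*$, or to a \emph{convex} open set; in Theorem~\ref{main to starshaped} the map $\mu$ is only assumed proper as a map to a \emph{starshaped} set $\calT$, so the connectivity input is not available as cited. (Your non-emptiness argument for $\mu^{-1}(0)$ via the negative Liouville flow is correct.)

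The paper sidesteps this by reversing the order of the two steps: it applies Proposition~\ref{prop:local global} with $M' = \bigsqcup_j \mu_{Y_j}^{-1}(\calT)$ --- the proposition places no connectedness hypothesis on $M'$, and each $\mu_{Y_j}$ is proper to $\g^*$ by Lemma~\ref{muY is proper}, so each restriction to $\calT$ is proper --- and obtains a global isomorphism of $M$ with $\bigsqcup_j Y_j|_{\calT}$ first. Since $M/G$ is connected and each $(Y_j|_{\calT})/G$ is a non-empty open and closed subset of the quotient, there can be only one summand. If you adopt this ordering, your write-up goes through with no extra input; as written, your $N=1$ step needs either a convexity hypothesis you do not have or a substitute argument.
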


\begin{proof}
By Lemma~\ref{le:local} and Proposition \ref{prop:local global},
there exists a finite disjoint union of centred Hamiltonian $G$ models,
$\bigsqcup_j (Y_j , \omega_{Y_j} , \mu_{Y_j} )$,
whose restriction to $\calT$ is isomorphic to $(M,\omega,\mu)$.
If $M/G$ is connected, 
since each $(Y_j|_{\calT})/G$ is non-empty,
there can be only one $Y_j$ in this union.
This prove Theorem~\ref{main to starshaped}.
\end{proof}

Finally, we prove Theorem \ref{main}.

\begin{proof}[Proof of Theorem~\ref{main}] \ 
Let a compact Lie group $G$
act on a symplectic manifold $(M,\mu)$
with momentum map $\mu \colon M \to \g^*$.
Assume that $M/G$ is connected, $\mu$ is proper, and $\omega$ is exact.

Let $\mu'$ be the momentum map that is obtained from some invariant primitive 
of $\omega$.
Then $(M,\omega,\mu')$ is an exact Hamiltonian $G$ manifold.
By Theorem~\ref{main to starshaped},
there exists a centred Hamiltonian $G$ model $(Y,\omega_Y,\mu_Y)$
whose restriction to $\calT$ is isomorphic to $(M,\omega,\mu')$,
hence equivariantly sympectomorphic to $(M,\omega)$.
\end{proof}


{
\section{Properties of centred Hamiltonian $G$ models}
\labell{sec:properties}

In this section we present properties of centred Hamiltonian
$G$ models that we used to prove 
Corollaries~\ref{cor:properties}, \ref{cor:S1}, and~\ref{cor:contractible}
of Theorem~\ref{main}.

\bigskip

Let $G$ be a compact Lie group.
Let $(Y,\omega_Y,\mu_Y)$ be a centred Hamiltonian $G$-model,
associated with a closed subgroup $H$ of $G$
and a linear action of $H$ on a symplectic vector space $V$ 
with a quadratic momentum map $\mu_V \colon V \to \h^*$.

\bigskip

We recall from Lemma~\ref{alternative model}
that the model $Y$ can be identified with the vector bundle
\begin{equation} \labell{Y prime}
 G \times_H \left( \h^0 \times V \right) 
\end{equation}
over $G/H$, obtained as the quotient 
of $ G \times \left( \h^0 \times V \right) $
by the anti-diagonal $H$ action,
such that the embedding $v \mapsto [1,0,v]$ of $V$ 
as a subspace of the fibre is symplectic, 
the zero section $G \times_H \{ 0 \}$ is isotropic,
and the momentum map on the model is
\[[a,\nu,v] \mapsto \Ad^*(a) (\nu + \mu_V(v)),\]
where $\h^0$ is the annihilator of $\h$ in $\g^*$,
and where we identify $\h^*$ with the orthogonal complement of $\h^0$
in $\g^*$ with respect to some $\Ad$-invariant inner product.

\begin{Lemma} \labell{homotopy} 
The model $Y$ is homotopy equivalent to $G/H$,
and the Euler characteristic of $Y$ is non-negative.  
\end{Lemma}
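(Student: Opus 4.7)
My plan is to combine the alternative description of $Y$ from Lemma~\ref{alternative model}, a standard fibrewise deformation retraction, and the classical non-negativity of the Euler characteristic of a compact homogeneous space.

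First I would use the identification of $Y$ with the associated bundle $G \times_H (\h^0 \times V)$ supplied by Lemma~\ref{alternative model}, which is a $G$-equivariant real vector bundle over $G/H$. The fibrewise scalar contraction $(t,a,\nu,v) \mapsto (a, t\nu, tv)$ on $G \times (\h^0 \times V)$ commutes with the anti-diagonal $H$-action, because $H$ acts linearly on $\h^0$ (via the coadjoint representation, since $\h^0$ is invariant under the coadjoint action of $H$) and linearly on $V$ (via the given representation). Hence it descends to a smooth map $[0,1] \times Y \to Y$ that equals the identity at $t=1$ and is the projection onto the zero section $G \times_H \{(0,0)\}$ at $t=0$. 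The zero section is diffeomorphic to $G/H$ via $[a,0,0] \mapsto aH$, so this exhibits a smooth deformation retraction of $Y$ onto a copy of $G/H$. In particular $\chi(Y) = \chi(G/H)$.

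Second, I would invoke the classical fact that $\chi(G/H) \geq 0$ for any compact Lie group $G$ and closed subgroup $H$. One route is to pick a maximal torus $T$ of $H$, extend it to a maximal torus $S$ of $G$, and use that for any smooth action of a compact torus $S$ on a compact manifold $X$ one has $\chi(X) = \chi(X^S)$. A coset $gH \in G/H$ is $S$-fixed iff $g\inv S g \subseteq H$. If $\rank G = \rank H$, then $S = T$ and $(G/H)^S$ is a finite set of cardinality $|W_G|/|W_H| > 0$. If $\rank G > \rank H$, no coset satisfies $g\inv S g \subseteq H$, so $(G/H)^S = \emptyset$ and $\chi(G/H) = 0$. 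In either case $\chi(Y) = \chi(G/H) \geq 0$.

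Neither step presents a real obstacle. The only verification worth making explicit is that the fibrewise contraction descends to the $H$-quotient, which is immediate from the $H$-linearity of both factors; all remaining ingredients are standard.
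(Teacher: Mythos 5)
Your proof is correct and follows essentially the same route as the paper: deformation retract $Y = G \times_H (\h^0 \times V)$ onto its zero section $G/H$ via fibrewise scaling, then invoke $\chi(G/H) \geq 0$. The only difference is that where the paper cites Hopf--Samelson for the non-negativity of $\chi(G/H)$, you supply the standard maximal-torus localization argument that proves it; both are fine.
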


\begin{proof}
In the model~\eqref{Y prime},
the map $[a,\nu,v] \mapsto [a,t\nu,tv]$, for $t \in [0,1]$,
gives a deformation retraction of $Y$ to the zero section.
The zero section, in turn, is naturally identified with $G/H$.
This proves the first sentence. 
The second sentence follows from the first because,
by H.~Hopf and H.~Samelson \cite[page 241]{HS},
the Euler characteristic of $G/H$ is non-negative.
\end{proof}

\begin{Lemma} \labell{fixed set connected}
The fixed point set in the model $Y$ is connected.
Moreover,
if the momentum map $Y \to \g^*$ is proper, 
then this fixed point set is either empty or contains exactly one point.
\end{Lemma}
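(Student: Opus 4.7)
The plan is to analyze the fixed point set through the alternative model $Y \simeq G \times_H (\h^0 \oplus V)$ of Lemma~\ref{alternative model}, and to exploit the $G$-equivariant projection
\[ \pi \colon Y \longrightarrow G/H, \qquad [a,\nu,v] \mapsto aH. \]
Since $\pi$ is equivariant, the image of any $G$-fixed point of $Y$ is a $G$-fixed point of $G/H$. The $G$-fixed points of $G/H$ correspond to cosets $aH$ with $a^{-1}Ga \subseteq H$; because $a^{-1}Ga = G$, this forces $H = G$. So when $H \neq G$ the fixed point set $Y^G$ is empty (hence connected), and the moreover clause is trivial.

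It remains to handle the case $H = G$. Then $\h^0 = 0$, the bundle $G \times_H (\h^0 \oplus V)$ collapses to a single fibre, and the $G$-equivariant identification $Y \cong V$ shows that the $G$-action on $Y$ is the given linear $H = G$ representation on $V$. Hence
\[ Y^G = V^G, \]
which is a linear subspace of $V$ and therefore connected. This establishes the first sentence of the lemma.

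For the moreover, assume further that $\mu_Y$ is proper. Under $Y \cong V$ the momentum map $\mu_Y$ becomes the quadratic momentum map $\mu_V \colon V \to \g^*$. For any $v \in V^G$ and any $\xi \in \g$, the fundamental vector field satisfies $\xi_V(v) = 0$, so from the standard formula $\mu_V^\xi(v) = \tfrac{1}{2}\omega_V(\xi_V(v),v) = 0$; thus
\[ V^G \subseteq \mu_V^{-1}(0). \]
Properness of $\mu_V$ forces $\mu_V^{-1}(0)$ to be compact, so $V^G$ is a compact linear subspace of $V$ and therefore reduces to the origin. Consequently $Y^G = V^G = \{0\}$ consists of exactly one point, which completes the proof.

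The only mildly delicate step is the equivariant reduction to $H = G$ via the projection $\pi$; the remaining arguments are essentially linear-algebraic. I would expect the argument to be short once the alternative description of $Y$ from Lemma~\ref{alternative model} is invoked.
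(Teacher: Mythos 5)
Your proof is correct and follows essentially the same route as the paper's: reduce to the case $H=G$ (where $H\neq G$ gives an empty fixed point set), identify $Y$ with $V$ so that $Y^G=V^G$ is a linear subspace, and in the proper case observe that $V^G\subseteq\mu_V^{-1}(0)$ is a linear subspace contained in a compact set, hence $\{0\}$. The only difference is that you spell out two details the paper leaves implicit (the equivariant projection to $G/H$ and the vanishing of the quadratic momentum map on fixed vectors), which is fine.
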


\begin{proof}
Consider the model as described in~\eqref{Y prime}.
If $H \neq G$, then the fixed point set is empty.
Assume $H = G$.  Then the model is isomorphic to the vector space $V$
with the linear action of $H$ and with the momentum map $\mu_V$.
The fixed point set $V^H$ is connected because it is a linear subspace of~$V$.
Now assume that the momentum map $\mu$ is proper.
Then its zero level set $\mu\inv( \{ 0 \} )$ is compact.
The fixed point set $V^H$ then contains just one point,
because it is contained in $\mu\inv(\{ 0 \})$,
and a vector subspace of $V$ that is contained in a compact subset of $V$
must be trivial.
\end{proof}

Recall that the \emph{Gromov width} of a $2n$-dimensional symplectic manifold
$(M,\om)$ is defined to be the number
\[ \sup\big \{\pi r^2\,\big|\,B^{2n}_r
   \textrm{ symplectically embeds into }M\big\},\]
where $B^{2n}_r\subset\R^{2n}$ denotes the open ball of radius $r$ 
centred at the origin, equipped 
with the standard symplectic structure.
\begin{Lemma} \labell{Gromov width} 
The Gromov width of $Y$ is infinite.
\end{Lemma}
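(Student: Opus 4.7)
The plan is to construct, for every $t>0$, a self-map $\delta_t\colon Y\to Y$ satisfying $\delta_t^*\omega_Y = t\,\omega_Y$. Once such dilations are available, infinite Gromov width follows immediately: by Darboux's theorem there exists a symplectic embedding $\phi\colon B^{2n}_r\hookrightarrow Y$ for some $r>0$, and given any $R>0$, setting $t=R^2/r^2$ and composing with the linear rescaling $\psi\colon B^{2n}_R\to B^{2n}_r$, $v\mapsto (r/R)v$ (which pulls $\omega_{\mathrm{std}}$ back to $t^{-1}\omega_{\mathrm{std}}$), one checks that $\delta_t\circ\phi\circ\psi\colon B^{2n}_R\to Y$ is symplectic since $(\delta_t\circ\phi\circ\psi)^*\omega_Y = \psi^*\phi^*(t\omega_Y)=\psi^*(t\omega_{\mathrm{std}})=\omega_{\mathrm{std}}$.

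To construct $\delta_t$, I would work inside the alternative description $Y \cong G\times_H(\h^0\times V)$ of Lemma~\ref{alternative model}, and define
\[ \delta_t\colon [a,\nu,v]\mapsto [a,\,t\nu,\,\sqrt{t}\,v]. \]
This is well defined on the quotient because the $H$ action on $\h^0\oplus V$ is linear and therefore commutes with the fiberwise scaling $(\nu,v)\mapsto(t\nu,\sqrt{t}\,v)$.

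To verify $\delta_t^*\omega_Y=t\,\omega_Y$, I would lift the computation to $G\times(\h^0\times V)$ and push through the diffeomorphism $(a,\nu,v)\mapsto(a,\Ad^*(a)(\nu+\mu_V(v)),v)$ of~\eqref{a nu v} into $T^*G\times V$. Because $\mu_V$ is quadratic, $\mu_V(\sqrt{t}\,v)=t\,\mu_V(v)$, so the lifted scaling corresponds under this diffeomorphism to the map
\[ (a,a\varphi,v)\mapsto (a,\,t\cdot a\varphi,\,\sqrt{t}\,v) \]
on $T^*G\times V$, i.e.\ the usual fiberwise dilation by $t$ on the cotangent bundle together with the linear dilation by $\sqrt{t}$ on $V$. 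The first of these pulls the tautological one-form back to $t\,\lambda_{\taut}$ and hence $\omega_{T^*G}$ back to $t\,\omega_{T^*G}$; the second, since $\omega_V$ has constant coefficients, pulls $\omega_V$ back to $(\sqrt{t})^2\omega_V=t\,\omega_V$. Adding the two and using the characterization of $\omega_Y$ as (the descent of) the pullback of $\omega_{T^*G}\oplus\omega_V$ gives $\delta_t^*\omega_Y=t\,\omega_Y$, completing the proof.

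The only step that requires care is the bookkeeping showing that the fiberwise rescaling on the alternative model matches the standard dilation on $T^*G\times V$ under the diffeomorphism~\eqref{a nu v}; this is where the quadratic nature of $\mu_V$ is essential, and is what forces the asymmetric exponents $t$ on $\h^0$ and $\sqrt{t}$ on $V$. Everything else is formal.
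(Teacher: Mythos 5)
Your proof is correct and is essentially the paper's own argument: the paper defines the same conformal dilation (as the time-$\log t$ flow of fibrewise multiplication by $e^{t}$ on $T^*G$ and $e^{t/2}$ on $V$, verified via the primitive $\lambda_{\taut}\oplus\lambda_V$, which commutes with the anti-diagonal $H$ action and hence descends to $Y$ with $\rho_t^*\omega_Y=e^t\omega_Y$) and then rescales a Darboux ball exactly as you do. The only cosmetic difference is that you carry out the bookkeeping in the alternative model $G\times_H(\h^0\times V)$ using the two-form directly, while the paper works upstairs on $T^*G\times V$ with the one-form.
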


\begin{proof}
Recall that the model $Y$ is obtained as a symplectic reduction
of $T^*G \times V$, where the two-form and momentum map 
on $T^*G \times V$ are induced from the one-form 
$\lambda_\taut \oplus \lambda_V$
that is described in Section~\ref{sec:models}.
Define a flow $\rho_t$ on $T^*G \times V$ by fibrewise multiplication
by $e^t$ on $T^*G$ and multiplication by $e^{\frac{1}{2}t}$ on $V$.
Then 
$\rho_t^* (\lambda_\taut \oplus \lambda_V) 
 = e^t (\lambda_\taut \oplus \lambda_V)$,
and $\rho_t$ commutes with the anti-diagonal $H$ action
by which we quotient to get $Y$.
It follows that $\rho_t$ descends to the symplectic reduction
$Y = (T^*G \times V) /\!/ H$
and satisfies $\rho_t^* \lambda_Y = e^t \lambda_Y$,
hence $\rho_t^* \omega_Y = e^t \omega_Y$.

By Darboux's theorem, for some $\eps > 0$ 
there exists a symplectic embedding
$i \colon B^{2n}_\eps \to Y$.
The composition
$ x \mapsto \rho_t(i( e^{-\frac{1}{2}t} x)) $
is a symplectic embedding of $B^{2n}_r$ into $Y$
where $r = \eps e^{\frac{1}{2}t}$. 
Because $\pi r^2 = \pi \eps^2 e^t$ can be made arbitrarily large
by appropriate choices of $t$, the Gromov width of $Y$ is infinite.
\end{proof}

We now present two situations in which we get even more precise information
on the models that can occur in Theorem \ref{main}.

\begin{Lemma}[Circle actions] \labell{circle}
Suppose that $G$ is the circle group $S^1$
and that its action on $Y$ is faithful.
Then $Y$ is equivariantly symplectomorphic to one of the following examples.
\begin{itemize}
\item[(a)]
The cylinder $S^1 \times \R$,
with the standard symplectic form $dq \wedge dp$
where $q \mod \Z$ is a coordinate on $S^1 \cong \R/\Z$ 
and $p$ is a coordinate on $\R$.
The circle $S^1$ acts by rotations on the $S^1$ factor.
\item[(b)]
The vector space $\C^n$, with the standard symplectic form.
The circle $S^1\sub\C$ acts by 
$ \lambda \cdot (z_1,\ldots,z_n) = 
  (\lambda^{m_1} z_1, \ldots, \lambda^{m_n} z_n) $,
where $m_1,\ldots,m_n$ are either all positive integers 
or all negative integers.
\end{itemize}
\end{Lemma}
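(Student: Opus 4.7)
The proof proceeds by exhaustive case analysis on the isotropy data defining the model. The plan is to enumerate all triples $(G,H,V)$ with $G=S^1$ that give rise to a centred Hamiltonian $S^1$-model on which $S^1$ acts faithfully, using the alternative description $Y\cong G\times_H(\h^0\times V)$ from Lemma~\ref{alternative model}. Since every closed subgroup of $S^1$ is either $S^1$ itself, a finite cyclic group $\Z/k$, or trivial, there are only a few cases to handle.

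A preliminary step identifies when the $S^1$-action on $Y$ is faithful. Because $S^1$ is abelian, an element $\zeta\in S^1$ fixes $[a,\nu,v]$ iff $\zeta^{-1}\in H$ and this element fixes $v$ (the coadjoint action on $\h^0\subset\g^*$ is trivial). Hence the action of $S^1$ on $Y$ is faithful iff the representation of $H$ on $V$ is faithful. I would also record that, since the lemma is applied in the setting of Theorem~\ref{main}, the momentum map $\mu_Y$ is proper; by Lemma~\ref{muY is proper} this is equivalent to the central orbit being isolated in $\mu_Y^{-1}(0)$.

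In the case $H=S^1$, the annihilator $\h^0$ vanishes so $Y=V$ is just the symplectic representation, with momentum map $\mu_V\colon V\to\R$. Decomposing $V$ into weight spaces gives weights $m_1,\dots,m_n\in\Z$; properness of the quadratic $\mu_V$ forces all $m_j$ to be nonzero and of the same sign, and faithfulness forces $\gcd(|m_1|,\dots,|m_n|)=1$. Choosing a symplectic weight basis presents $Y$ as $\C^n$ with the standard symplectic form and the action of case~(b). In the complementary case, $H$ is finite, so $\h^*=0$ forces $\mu_V\equiv0$, and $\h^0=\g^*\cong\R$; the momentum map becomes $[a,\nu,v]\mapsto\nu$. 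Properness of this map forces $\mu_Y^{-1}(0)=S^1\times_H V$ to be compact, which, since $V$ is a vector space, forces $V=0$. Then $Y=S^1\times_H\R=(S^1/H)\times\R$, but the $S^1$-action on this has kernel $H$, so faithfulness forces $H=\{1\}$. Thus $Y=T^*S^1$ with its canonical one-form, matching case~(a).

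The main obstacle is the bookkeeping required to match the induced symplectic structures and actions on $Y$ with the standard forms described in (a) and (b). In case (b), this requires using Lemma~\ref{alternative model}(\ref{omega V}) to recognize that the pulled-back form on $V$ is $\omega_V$ and identifying the weight decomposition with the standard action on $\C^n$. In case (a), one must verify that the induced form on $T^*S^1$ is the canonical $dq\wedge dp$, which follows directly from Lemma~\ref{structure descends} since the reduction at the free $H=\{1\}$-action leaves $\omega_{T^*G}\oplus\omega_V$ unchanged. A minor subtlety is justifying the implicit properness assumption on $\mu_Y$, which I would address either by invoking the context (this lemma feeds into Corollary~\ref{cor:S1}) or directly via Lemma~\ref{muY is proper}.
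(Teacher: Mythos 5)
Your proposal is correct and follows essentially the same route as the paper: faithfulness of the $S^1$ action on $Y$ reduces to faithfulness of the $H$ representation on $V$ (using that $S^1$ is abelian), properness of $\mu_Y$ reduces to properness of $\mu_V$, and the case analysis over the closed subgroups $H\subseteq S^1$ yields exactly the dichotomy $H=\{1\}$, $V=\{0\}$ (the cylinder) versus $H=S^1$, $V\cong\C^n$ with all weights of one sign. You are also right that properness of $\mu_Y$ is an implicit hypothesis here (it is supplied by the context of Corollary~\ref{cor:S1}); the paper's proof likewise invokes it without restating it.
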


\begin{proof}
Because the $S^1$ action is faithful on $Y$,
the $H$ action is faithful on $V$.
(This uses the fact that $S^1$ is abelian.)
Because $\mu_Y \colon Y \to \g^*$ is proper,
the momentum map $\mu_V \colon V \to \h^*$ is proper. 
These two facts imply that either $H = \{ 1 \}$ and $V = \{ 0 \}$
or $H = S^1$ and $V \cong \C^n$
where $H$ acts on all the coordinates with positive weights
or on all the coordinates with negative weights.
\end{proof}

\begin{Lemma} \labell{contractible model}
Suppose that the model $Y$ is contractible.  Then $(Y,\omega_Y,\mu_Y)$ 
is equivariantly symplectomorphic to $(V,\omega_V,\mu_V)$. 
\end{Lemma}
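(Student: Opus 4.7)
The plan is to reduce the statement to showing that $H = G$, and then invoke the alternative description of the model from Lemma~\ref{alternative model}.

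First, by Lemma~\ref{homotopy}, the model $Y$ is homotopy equivalent to $G/H$ (the deformation retraction being $[a,\nu,v]\mapsto[a,t\nu,tv]$ in the description $G\times_H(\h^0\oplus V)$). Since $Y$ is contractible by assumption, so is $G/H$.

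Second, because $G$ is compact, $G/H$ is a compact topological manifold without boundary, and it is connected because it is contractible. A connected compact topological manifold without boundary of positive dimension has nontrivial top homology with $\Z/2$-coefficients (the $\Z/2$ fundamental class) and therefore cannot be contractible. Consequently $\dim(G/H)=0$, which forces $G/H$ to be a point and $H=G$.

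Third, when $H=G$ we have $\h^0=\{0\}$, and the alternative description of $Y$ from Lemma~\ref{alternative model} specializes to a $G$-equivariant diffeomorphism
\[
 Y \;\cong\; G\times_G\bigl(\{0\}\times V\bigr) \;\xrightarrow{\;\cong\;}\; V, \qquad [a,0,v]\mapsto a\cdot v.
\]
Pulling back along the inclusion $v\mapsto[1,0,v]$, item~(\ref{omega V}) of Lemma~\ref{alternative model} identifies the symplectic form with $\omega_V$, while item~(\ref{momentum map}), combined with the $G$-equivariance $\mu_V(a\cdot v)=\Ad^*(a)(\mu_V(v))$, identifies the momentum map with $\mu_V$. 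These identifications extend from the fibre over the identity coset to all of $Y$ by $G$-equivariance, yielding the required isomorphism $(Y,\omega_Y,\mu_Y)\cong(V,\omega_V,\mu_V)$.

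The only mildly subtle step is the topological input that a contractible compact manifold without boundary is a point; everything else is a direct reading of Lemma~\ref{homotopy} and Lemma~\ref{alternative model}. No analytical obstacle arises.
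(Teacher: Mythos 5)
Your proof is correct and follows essentially the same route as the paper's: Lemma~\ref{homotopy} gives that $G/H$ is contractible, a contractible closed manifold must be a point (you supply the $\Z/2$ fundamental class justification that the paper leaves implicit), hence $H=G$, and then the description of Lemma~\ref{alternative model} identifies $(Y,\omega_Y,\mu_Y)$ with $(V,\omega_V,\mu_V)$. The extra details you provide on the identification of the form and momentum map are accurate and consistent with the paper's appeal to the alternative model~\eqref{Y prime}.
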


\begin{proof}
By Lemma \ref{homotopy}, since $Y$ is contractible, so is $G/H$.  
Because $G$ is a compact connected Lie group and $H$ a closed subgroup,
the quotient $G/H$ is a closed manifold.
Being a contractible closed manifold, $G/H$ is a point.
So $H=G$. 
It now follows from the description~\eqref{Y prime}
that $Y$ is isomorphic to $V$.
\end{proof}

}

\section{Exotic actions on symplectic vector spaces}
\labell{sec:nonproper}

If we drop the assumption that the momentum map be proper,
the conclusions of Theorem~\ref{main}
and of its consequences can fail dramatically, 
even when the symplectic manifold is a symplectic vector space.
We show this 
through examples of exotic Hamiltonian actions on symplectic vector spaces.
Corollaries~\ref{F fixed}, \ref{not linearizable}, and~\ref{bad U revised},
give examples where, respectively, the conclusions 
of Corollary \ref{cor:properties} part (ii),
of Corollary \ref{cor:contractible},
and of Theorem \ref{main}, fail.

Our main ingredient is the following result,
which was communicated to us by K.~Pawa\l owski:

\begin{Proposition} \labell{prop:fixed} 
Let $G$ be a compact connected nonabelian Lie group.
Then there exists a smooth $G$ action on a Euclidean space
without fixed points.
Moreover, let $F$ be a closed, stably parallelizable manifold;
then there exists a smooth $G$ action on a Euclidean space
whose fixed point set is diffeomorphic to $F$.
\end{Proposition}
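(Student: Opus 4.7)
The strategy is to reduce to a theorem of Pawa\l owski on smooth $G$-actions on closed disks with prescribed fixed-point sets. First note that the two assertions of the proposition are not entirely independent: the empty manifold is (vacuously) a closed stably parallelizable manifold, so the fixed-point-free case is a special case of the second assertion with $F=\emptyset$. Even if one excludes the empty manifold by convention, the same construction handles both cases in parallel, so I will focus on the second assertion.

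The first step is a reduction to actions on closed disks. It suffices to construct, for each closed stably parallelizable manifold $F$, a smooth $G$-action on a closed disk $D^N$ (for some large $N$) such that the fixed-point set $(D^N)^G$ is diffeomorphic to $F$ and is contained in the interior of $D^N$. Granting this, the restriction of the action to $\mathrm{int}(D^N)\cong\R^N$ is a smooth $G$-action on Euclidean space whose fixed-point set is diffeomorphic to $F$, as required.

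The core of the proof is the construction of this disk action. The key structural input is that $G$, being compact, connected, and nonabelian, contains a closed subgroup isomorphic to $SU(2)$ or $SO(3)$: any nontrivial root $\alpha$ in the root space decomposition of $\g$ produces an $\mathfrak{su}(2)$-triple spanned by $\g_\alpha$, $\g_{-\alpha}$, and their commutator, which exponentiates to a rank-one nonabelian subgroup. The presence of such a subgroup is precisely what prevents the Smith-theoretic obstructions (which force actions of tori and finite $p$-groups on Euclidean space to have nonempty, $p$-adically acyclic fixed-point sets) from applying to $G$. Pawa\l owski's theorem, building on Oliver's obstruction theory, then produces the required disk action: embed $F$ equivariantly (with trivial $G$-action) into a large $G$-representation $V$ with $V^G=\{0\}$, use the stable parallelizability of $F$ to trivialize the equivariant normal bundle after stabilization, thicken $F$ to a $G$-invariant tubular neighborhood, and extend this by equivariant surgery to a compact contractible $G$-manifold. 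An application of the equivariant $h$-cobordism theorem (or direct stabilization by product with a large free representation) converts this contractible manifold into a disk. The main obstacle of the whole program is verifying that the equivariant surgery and smoothing obstructions, which live in Oliver-type obstruction groups, vanish; the hypotheses that $G$ is nonabelian and that $F$ is stably parallelizable are exactly what is needed to kill these obstructions.
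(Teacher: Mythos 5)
Your overall strategy---realize $F$ as the fixed point set of a smooth $G$-action on a contractible manifold by appealing to Pawa\l owski's realization/thickening theorem---is the same as the paper's, and your reduction of the fixed-point-free assertion to the case $F=\emptyset$ is fine. But your sketch has a genuine gap at its heart: you never construct the fixed-point-free contractible $G$-space on which everything hinges. Observing that $G$ contains a copy of $\SU(2)$ or $\SO(3)$, so that the Smith-theoretic obstructions do not apply, tells you only that no known obstruction forbids such an action; it does not produce one. The paper supplies the positive construction (due to Hsiang--Hsiang, after Conner--Floyd and Conner--Montgomery): take a representation $V$ with $V^G=0$ that admits an equivariant self-map $f\colon S(V)\to S(V)$ of degree zero on its unit sphere, and form the tower of mapping cylinders of $f$; this is a contractible, finite-dimensional $G$-CW complex with finitely many orbit types and no fixed points. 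Its join with $F$ is the homotopy model $X$ with $X^G=F$ to which Pawa\l owski's equivariant thickening theorem is then applied. Your substitute---``extend by equivariant surgery\dots the hypotheses are exactly what is needed to kill these obstructions''---asserts the conclusion of the hard step rather than proving it. Relatedly, your setup is internally inconsistent: a nonempty $F$ cannot be embedded with trivial $G$-action into a representation $V$ with $V^G=\{0\}$; one needs $V^G$ to be a large trivial summand containing $F$.

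On the endgame: the paper does not pass through closed disks. Pawa\l owski's thickening yields an open smooth $G$-manifold $M$ with $M^G=F$ that is contractible, simply connected at infinity, and of dimension at least $5$, and Stallings' theorem then identifies $M$ with a Euclidean space. Your detour through a compact contractible $G$-manifold and ``the equivariant $h$-cobordism theorem'' to obtain a disk is both unnecessary and unjustified as stated (a compact contractible manifold need not be a disk, and upgrading it to one requires control of the boundary that you have not established). Your observation that restricting a disk action to the interior yields a Euclidean-space action is correct as far as it goes, but since the disk action itself is not actually produced, the gap above remains.
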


Recall that a manifold is called stably parallelizable 
if the direct sum of its tangent bundle with some trivial vector bundle 
is trivializable.
Examples include the empty set, finite sets, Lie groups, 
spheres of any dimension, and toric manifolds.

\begin{proof}[Proof of Proposition~\ref{prop:fixed}]
Let $Y$ be a finite dimensional countable $G$--CW complex,
with finitely many orbit types and with no fixed points,
which is contractible.

The construction of such a $Y$ is described by Wu-chung Hsiang and Wu-yi Hsiang 
in \cite[Theorem 1.9]{hsiang-hsiang}
and is based on earlier ideas of Conner, Floyd, and Montgomery 
\cite{conner-floyd,conner-montgomery}.
The idea is to take
a representation $V$ of $G$ without a trivial summand
and an equivariant map $f \colon S(V) \to S(V)$ of degree zero
on its unit sphere,
as constructed in \cite[Proposition 1.4]{hsiang-hsiang},
and to take $Y$ to be the tower of mapping cylinders of $f$.

Let $X$ be the topological join of $Y$ with $F$.
Then $X$ is a contractible finite dimensional countable $G$--CW complex
with finitely many orbit types and with fixed point set $F$.

Let $B$ be the union of $F$ with the equivariant $0$--cells in $X \ssminus F$.
Let $m$ be such that the direct sum of $TF$ with a trivial vector bundle 
is isomorphic to the trivial bundle $F \times \R^m$,
and let $E = X \times \R^m \to X$ be the trivial bundle.
We now apply a theorem of K.~Pawa\l owski \cite[Theorem~3.1]{PaFixed},
which is based on Pawa\l owski's equivariant thickening procedure
\cite[Theorem 2.4]{PaFixed}.
Given these $X$, $F$, $B$, and $E$, 
this theorem yields a smooth $G$--manifold $M$
that contains $B$ as a smooth invariant submanifold and such that $M^G = F$, 
and a homotopy equivalence $M \to X$.
As noted in \cite[Remarks~2.5 and~3.2]{PaFixed}, 
we can assume that $\dim M \geq 5$,
and the construction yields an $M$ that is contractible
and simply connected at infinity;
by a result of Stallings \cite[Corollary~5--1]{Sta},
$M$ is diffeomorphic to a Euclidean space.
\end{proof}

\begin{Corollary} \labell{F fixed} 
Let $G$ be a compact connected nonabelian Lie group.
Then there exists a Hamiltonian $G$ action on a symplectic vector space
without fixed points.
Moreover, let $F$ be a closed, stably parallelizable manifold;
then there exists a Hamiltonian $G$ action on a symplectic vector space
whose fixed point set is symplectomorphic to $T^*F$. 
\end{Corollary}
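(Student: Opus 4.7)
The plan is to promote the smooth Euclidean $G$-actions provided by Proposition~\ref{prop:fixed} to Hamiltonian actions via the cotangent lift. Given any smooth $G$-action on a manifold $N$, its cotangent lift to $T^*N$ preserves both the canonical symplectic form and the tautological one-form $\lambda_{\taut}$, and the formula $\mu^\xi := \iota(\xi_{T^*N})\lambda_{\taut}$ yields an equivariant momentum map. Applied to a smooth $G$-action on $N=\R^n$, this produces a Hamiltonian $G$-action on the symplectic vector space $T^*\R^n \cong \R^{2n}$ equipped with its standard symplectic form.

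For the first assertion, I apply the fixed-point-free half of Proposition~\ref{prop:fixed} to obtain a smooth $G$-action on some $\R^n$ with no fixed points. Its cotangent lift is also fixed-point-free, since any fixed point $(x,p)$ of the lifted action projects to a fixed point $x \in (\R^n)^G = \emptyset$. This provides the desired Hamiltonian $G$-action on $\R^{2n}$ without fixed points.

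For the second assertion, I apply Proposition~\ref{prop:fixed} to obtain a smooth $G$-action on some $\R^n$ whose fixed set is diffeomorphic to $F$, and take the cotangent lift. To identify the fixed set of the lifted action with $T^*F$ as a symplectic submanifold of $T^*\R^n$, I choose a $G$-invariant Riemannian metric on $\R^n$ (obtained by averaging). At each $x \in F$ the tangent space splits $G$-equivariantly and orthogonally as $T_x\R^n = T_xF \oplus N_x$ with $(N_x)^G = 0$ (since $F$ is the full fixed set and $G$ is compact). Dualizing, the $G$-invariant covectors at $x$ form precisely $T_x^*F$, so the fixed set of the cotangent lift is the submanifold $T^*F \subset T^*\R^n$ consisting of covectors supported on the tangent directions to $F$. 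A direct computation using the definition of $\lambda_{\taut}$ shows that its restriction to this submanifold agrees with the tautological one-form of $T^*F$, and hence the restriction of the canonical symplectic form on $T^*\R^n$ is the canonical symplectic form on $T^*F$.

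The proof is essentially a routine application of cotangent-lift formalism once Proposition~\ref{prop:fixed} is granted. The main (mild) subtlety is the final symplectic identification of the fixed set with $T^*F$, which reduces to the $G$-equivariant orthogonal splitting of the tangent bundle along $F$ and the naturality of the tautological one-form.
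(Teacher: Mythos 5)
Your proposal is correct and follows essentially the same route as the paper: apply Proposition~\ref{prop:fixed}, take the cotangent lift to get a Hamiltonian action on $T^*\R^n \cong \R^{2n}$, and identify the fixed set of the lift with $T^*F$. The paper attributes this last identification to the slice theorem, while you spell it out via the $G$-equivariant orthogonal splitting of $T\R^n$ along $F$ and the naturality of the tautological one-form; these are the same argument at different levels of detail.
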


\begin{proof}
By Proposition \ref{prop:fixed} there exists 
a smooth $G$ action on a Euclidean space $X$ 
whose fixed point set is diffeomorphic to $F$.
The fixed point set of the induced $G$ action on the cotangent bundle $T^*X$
is symplectomorphic to the cotangent bundle $T^*F$ of the fixed point set $F$;
this is a consequence of the slice theorem for compact group actions.
We obtain Corollary~\ref{F fixed}
by taking $V$ to be the direct sum $X \oplus X^*$,
identified with the cotangent bundle $T^*X$,
with the cotangent lifted $G$ action.
\end{proof}

\begin{Corollary} \labell{not linearizable}
Let $G$ be a compact connected nonabelian Lie group.
Then there exists a Hamiltonian $G$ action on a symplectic vector space 
that is not isomorphic to a linear action.
\end{Corollary}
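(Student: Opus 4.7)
The plan is to deduce Corollary \ref{not linearizable} directly from Corollary \ref{F fixed} by observing that the fixed point set of a linear symplectic $G$-action on a symplectic vector space is severely constrained, while Corollary \ref{F fixed} allows us to realize very flexible fixed point sets.

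First I would recall that if $G$ acts linearly on a symplectic vector space $(W,\omega_W)$ preserving $\omega_W$, then the fixed point set $W^G$ is a linear subspace of $W$, and because $G$ is compact we may choose a $G$-invariant $\omega_W$-compatible complex structure, with respect to which $W^G$ is a complex subspace. In particular $W^G$ is a symplectic subspace of $W$ and is equivariantly symplectomorphic to a standard symplectic vector space $(\R^{2k},\omega_{\mathrm{std}})$ on which $G$ acts trivially. Hence $W^G$ is, in particular, diffeomorphic to a Euclidean space.

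Next I would invoke Corollary \ref{F fixed}: pick any closed, stably parallelizable manifold $F$ of positive dimension whose cotangent bundle is not diffeomorphic to a Euclidean space -- for instance, $F = S^n$ with $n \geq 1$ (so that $T^*F = T^*S^n$ has $H^n(T^*S^n) = \Z$ and is therefore not contractible, hence not diffeomorphic to $\R^{2n}$). Corollary \ref{F fixed} then produces a Hamiltonian $G$ action on a symplectic vector space $(V,\omega_V)$ whose fixed point set $V^G$ is symplectomorphic to $T^*F$.

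Finally, I would combine the two observations: if this action were isomorphic, as a Hamiltonian $G$ manifold, to some linear symplectic $G$ action on a symplectic vector space $(W,\omega_W)$, then an equivariant symplectomorphism would restrict to a diffeomorphism between the fixed point sets, giving a diffeomorphism from $T^*F$ to a Euclidean space, which is impossible. This gives the desired Hamiltonian $G$-action on a symplectic vector space that is not isomorphic to any linear action. No single step is a serious obstacle here; the only point requiring care is the reduction of the notion of ``linear action'' to the statement that the fixed point set is diffeomorphic to a Euclidean space, and this follows from the existence of $G$-invariant compatible complex structures on symplectic representations of compact groups.
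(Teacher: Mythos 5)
Your argument is correct and, like the paper, deduces the result from Corollary~\ref{F fixed} via an obstruction read off from the fixed point set; the paper simply makes the most economical choice, taking the fixed-point-free action from the first part of Corollary~\ref{F fixed} and noting that a linear action always fixes the origin. Your variant with $F=S^n$ works equally well (the key facts you use --- that the fixed set of a linear symplectic action of a compact group is a linear, hence contractible, subspace, and that $T^*S^n$ is not contractible --- are both sound) and has the added feature of producing non-linearizable actions whose fixed point set is nonempty.
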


\begin{proof}
In Corollary~\ref{F fixed}, take an action without fixed points.
\end{proof}

\begin{Corollary} \labell{bad U revised}
Let $G$ be a compact connected nonabelian Lie group.
Then there exists a Hamiltonian $G$ action 
on a symplectic vector space $(V,\omega_V)$
with the following properties.
\begin{itemize}
\item[(i)]
There exist two points in $V$,
such that no $G$ invariant subset of $V$ that contains both points
admits an equivariant open embedding into any centred Hamiltonian $G$ model.
\item[(ii)]
There exists a connected bounded $G$ invariant open subset of $V$ 
that does not admit an equivariant open embedding 
into any centred Hamiltonian $G$ model.
\end{itemize}
\end{Corollary}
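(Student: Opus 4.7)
The plan is to apply Corollary~\ref{F fixed} with $F=\{q_1,q_2\}$, a stably parallelizable two-point manifold. This produces a Hamiltonian $G$-action on some symplectic vector space $(V,\omega_V)$ whose fixed set is symplectomorphic to $T^*F$, hence consists of two isolated points $p_1,p_2\in V^G$. These two points will serve as the witnesses for both (i) and (ii); the obstruction comes from the very rigid shape of $Y^G$ in any centred Hamiltonian $G$-model. Using the alternative description $Y=G\times_H(\h^0\oplus W)$ from Lemma~\ref{alternative model} (writing $W$ for the symplectic $H$-representation so as not to collide with $V$), a short calculation, already present in the proof of Lemma~\ref{fixed set connected}, shows that $[a,\nu,w]$ is $G$-fixed iff $aHa^{-1}=G$, forcing $H=G$ and hence $\h^0=0$, together with $w\in W^G$. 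Therefore $Y^G$ is either empty (when $H\neq G$) or is the linear subspace $W^G$ of the ambient vector space $W$.

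For (i), I would suppose toward a contradiction that some $G$-invariant open set $U\subseteq V$ containing both $p_1$ and $p_2$ admits an equivariant open embedding $\phi\colon U\to Y$ into a centred model. Equivariance forces $\phi(p_1),\phi(p_2)\in Y^G$, so $Y^G\neq\emptyset$ and $H=G$, making $Y^G=W^G$ a linear subspace of $W$. Since $V^G=\{p_1,p_2\}$, we have $U^G=\{p_1,p_2\}$, and by equivariance and injectivity of~$\phi$ the set $\phi(U)\cap Y^G=\phi(U^G)=\{\phi(p_1),\phi(p_2)\}$ consists of exactly two points; yet it is open in $Y^G$ because $\phi(U)$ is open in $Y$. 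An open subset of a positive-dimensional linear subspace is not discrete, while a zero-dimensional linear subspace is $\{0\}$ and contains only one point, so either way we reach a contradiction.

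For (ii) I would exhibit the required $U$ directly. Take a smooth path in $V$ from $p_1$ to $p_2$, form its $G$-saturation $K$, which is compact and connected as the continuous image of $G\times[0,1]$, and let $U$ be a $G$-invariant open $\eps$-neighborhood of $K$ (e.g.\ using a $G$-invariant Riemannian metric obtained by averaging). For small $\eps$, $U$ is connected, bounded, $G$-invariant, open, and contains both $p_1$ and $p_2$, so part~(i) applies and $U$ cannot be equivariantly open-embedded into any centred model. The main obstacle is the step in (i) that exploits the mismatch between the isolated fixed points in $V^G$ produced by the exotic construction and the connected linear-subspace structure of $Y^G$; once Lemmas~\ref{alternative model} and~\ref{fixed set connected} are brought to bear, both (i) and (ii) follow with only topological bookkeeping.
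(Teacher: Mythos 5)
Your proposal is correct and follows essentially the same route as the paper: apply Corollary~\ref{F fixed} with $F$ a two-point manifold to get exactly two isolated fixed points, then use the connectedness of the fixed point set of any centred model (Lemma~\ref{fixed set connected}, whose proof already contains your identification of $Y^G$ with a linear subspace) to rule out an equivariant open embedding, and for (ii) produce a bounded connected invariant open neighbourhood of the two fixed points (the paper simply takes $G\cdot B$ for a ball $B$ containing both, which is an immaterial difference from your tube around the saturation of a path). One small remark: part (i) quantifies over \emph{all} $G$-invariant subsets containing the two points, not only open ones, but your argument already covers this since it only uses that the image $\phi(U)$ is open in $Y$, not that $U$ is open in $V$.
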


\begin{proof}
By Corollary \ref{F fixed}, there exists a $G$ action 
on a symplectic vector space $(V,\omega_V)$ with exactly two fixed points.
By Lemma~\ref{fixed set connected},
the fixed point set of any centred Hamiltonian $G$ model is connected.
For (i), note that a manifold with a $G$ action with two isolated
fixed points does not admit an equivariant open embedding
into any manifold with a $G$ action whose fixed point set is connected.
For (ii), take the invariant open subset to be $G \cdot B$ 
where $B$ is an open ball that contains the two fixed points.
\end{proof}

%
%
%

\section{Acknowledgments}

We would like to thank Krzysztof Pawa\l owski for explaining the proof
of Proposition \ref{prop:fixed} to us,
and we would like to thank Kai Cieliebak for a helpful discussion.
This research was partially funded by 
the Natural Sciences and Engineering Research Council of Canada.


\begin{thebibliography}{30}


\bibitem{BJ}
Th.~Br\"ocker and K.~J\"anich, 
\emph{Introduction to Differential Topology},
Cambridge University Press, 1982.

\bibitem{chiang}
R.\ Chiang, \emph{Complexity one Hamiltonian $\SU(2)$ and $\SO(3)$ actions}, 
Amer.\ J.\ Math.\ \textbf{127} (2005), no.\ 1, 129--168.


\bibitem{conner-floyd}
P.\ E.\ Conner and E.\ E.\ Flyod, \emph{On the construction
of periodic maps without fixed points}, 
Proceedings of the Amer.\ Math.\ Soc.\ vol.~\textbf{10} (1959), pp.~354--360.

\bibitem{conner-montgomery}
P.\ Conner and D.\ Montgomery,
\emph{An example for $\SO(3)$}, 
Proc.\ Nat.\ Acad.\ Sci.\ U.S.A.\ \textbf{48} (1962), pp.1918--1922.

\bibitem{delzant}
Thomas Delzant, \emph{Hamiltoniens p\'eriodiques et image
convexe de l'application moment}, Bull.\ Soc.\ Math.\ France 
\textbf{116} (1988), 315--339.



\bibitem{Gr} 
M.\ Gromov, \emph{Pseudoholomorphic curves in symplectic manifolds}, 
Invent.~Math.~{\bf 82} (1985), no.~2, 307--347.

\bibitem{GS1984}
V. Guillemin and S. Sternberg, 
\emph{A normal form for the moment map},
in: Differential geometric methods in mathematical physics (Jerusalem, 1982),
161--175, Math.\ Phys.\ Stud., 6, Reidel, Dordrecht, 1984.

\bibitem{IK}
P.\ Iglesias-Zemmour and Y.\ Karshon,
\emph{Smooth Lie group actions are parametrized diffeological subgroups},
Proc.\ Amer.\ Math.\ Soc.\ \textbf{140} no.~2 (2012), pp.~731--739.

\bibitem{HS} 
H.~Hopf, H.~Samelson, \emph{Ein Satz \"uber die Wirkungsr\"aume 
geschlossener Liescher Gruppen}, Comment.~Math.~Helv.~{\bf 13}, (1941). 
240--251. 

\bibitem{hsiang-hsiang}
Wu-chung Hsiang and Wu-yi Hsiang,
\emph{Differentiable actions of compact connected classical groups I},
Amer.\ J.\ of Math.\ vol.~\textbf{89}, no.~3, pp.~705--786.

\bibitem{k:periodic}
Y.\ Karshon,
\emph{Periodic Hamiltonian flows on four dimensional manifolds},
Memoirs Amer.\ Math.\ Soc.\ \textbf{672}, 1999.

\bibitem{KL}
Y.\ Karshon and E.\ Lerman, 
\emph{Non-compact symplectic toric manifolds}, 
SIGMA Symmetry Integrability Geom.\ Methods Appl.\ 11 (2015), 
Paper 055, 37 pp.


\bibitem{KT:centered}
Y.\ Karshon and S.\ Tolman,
\emph{Centered complexity one Hamiltonian torus actions},
Transactions of the American Mathematical Society \textbf{353} (2001), 
4831--4861.

\bibitem{KT:gromov}
Y. Karshon and S. Tolman,
\emph{The Gromov width of complex Grassmannians},
Algebraic and Geometric Topology \textbf{5} (2005),  
pages 911--922.  

\bibitem{KT:globex}
Y. Karshon and S. Tolman,
\emph{Classification of Hamiltonian torus actions with two dimensional 
quotients}, Geom.~Topol.~\textbf{18} (2014), \textbf{no.~2}, 669--716. 

\bibitem{kks}
D.\ Kazhdan, B.\ Kostant, and S.\ Sternberg,
\emph{Hamiltonian group actions and dynamical systems of Calogero type},
Comm.\ Pure Appl.\ Math. \textbf{31} (1978), no.~4, 481--507.

\bibitem{knop}
F.\ Knop,
\emph{Automorphisms of multiplicity free Hamiltonian manifolds},
J.\ Amer.\ Math.\ Soc.\ \textbf{24} (2011), no.~2, 567--601.

\bibitem{LMTW}
E. Lerman, E. Meinrenken, S. Tolman, and C. Woodward,
\emph{Nonabelian convexity by symplectic cuts},
Topology \textbf{37} (1998), no.~2, 245--259.

\bibitem{johnlee}
J.\ Lee, \emph{Introduction to Smooth Manifolds},
Graduate Texts in Mathematics, Springer, 2002.

\bibitem{losev}
I.\ V.\ Losev,
\emph{Proof of the Knop conjecture},
Ann.\ Inst.\ Fourier (Grenoble) \textbf{59} (2009), no.~3, 1105--1134.

\bibitem{marle1985}
C.\ M.\ Marle, { Mod\`{e}le d'action hamiltonienne d'un groupe de
Lie sur une vari\'{e}t\'{e} symplectique}, \emph{Rendiconti del
Seminario Matematico} \textbf{43} (1985), 227--251, Universit\`{a}e
Politechnico, Torino.

\bibitem{MW}
J.\ Marsden and A.\ Weinstein, \emph{Reduction of symplectic manifolds
with symmetry}, Rep.\ Math.\ Phys.\ \textbf{5} (1974), no.~1, 121--130.


\bibitem{moser} J.\ Moser, \emph{On the volume elements on a manifold}, 
Trans.\ Amer.\ Math.\ Soc.\ \textbf{120} (1965), 286--294. 

\bibitem{PaFixed} 
K.~Pawa\l owski, \emph{Fixed point sets of smooth group actions 
on disks and Euclidean spaces}, Topology {\bf 28} (1989), 
{\bf no.~3}, 273--289.


\bibitem{SL}
R.~Sjamaar and E.~Lerman, \emph{Stratified symplectic spaces and reduction}, 
The Annals of Mathematics, Second Ser., Vol.\ \textbf{134},
No.~2 (Sept.~1991), 375--422.

\bibitem{sjamaar}
R.~Sjamaar, \emph{A de Rham theorem for symplectic quotients},
Pacific J.\ of Math.\ vol.~\textbf{220}, No.~1, 2005, 153--166.


\bibitem{Sta}
J.\ Stallings, \emph{The piecewise-linear structure of Euclidean space}, 
Proc.\ Cambridge Philos.\ Soc.\ \textbf{58} (1962), 481--488. 



\end{thebibliography}
\end{document}